\newtheorem{proposicao}{Proposition}
\newtheorem{teorema}{Theorem}
\newtheorem{lema}{Lemma}
\newtheorem{corolario}{Corollary}
\newenvironment{proof}{{\noindent\bf Proof.}}{\hfill\vspace{0.1cm}}
\newtheorem{observacao}{Remark}
\newtheorem{definicao}{Definition}
\begin{document}
\title{A nonsmooth optimization technique in domains of positivity\footnote{{\scriptsize This work was supported by FAPERJ, Brazil.}}\author{Ronaldo Malheiros Greg\'orio\footnote{{\scriptsize Corresponding Author. Universidade Federal Rural do Rio de Janeiro. Programa de P\'os-gradua\c c\~ao em Modelagem Matem\'atica e Computacional. Av. Gov. Roberto Silveira, s/n, Moquet\'a, Nova Igua\c cu - CEP 26020-740, RJ, Brazil. E-mail: rgregor@ufrrj.br.}} \and }
\date{\today}}
\maketitle
\vspace{-1.0cm}

\begin{abstract}

This paper presents a nonsmooth proximal point technique for convex optimization in a special class of Hadamard manifold called homogeneous domains of positivity. The method is based on the particularization of the Rham decomposition theorem to symmetric spaces and it is applicable to the computing of minimizers for convex functions. Homogeneous domains of positivity  to be considered in this work  are those of specially reducible type in a sense which is introduced and largely discussed along the paper. General aspects on the technique are shown as the convergence analysis to the inner iterations of the method, the global convergence for both versions exact and inexact and, under particular assumptions, a lower bound to the number of outer iterations of the inexact version.        
\vspace{0.2cm}

\noindent {\bf Keywords}: Proximal point algorithm, homogeneous domains of positivity, Hadamard manifolds, compact Lie groups.

\vspace{0.2cm}

\noindent {\bf AMS subject classifications}: 65K10, 53C35, 53C25, 57S15.


\end{abstract}
\vspace{-0.5cm}
\section{Introduction}

In several situations researchers need to estimate solutions for optimization problems on manifolds of non-Euclidean type and this practice brings together  two other relevant aspects, the development of new optimization techniques and the extension of classical  methods. In this paper an optimization technique which is at the same time an extension and a theoretical improvement of the proximal point algorithm presented in \cite{Gregorio} to domains of positivity is introduced. The method is essentially based on the idea of the Rham Decomposition Theorem for simply connected symmetric spaces. According to \cite[p. 180]{Sakai}, the Rham Theorem can be enunciated as follows
\begin{teorema}[Rham decomposition theorem for simply connected symmetric spaces]
Let $S$ be a simply connected symmetric space. Then $S$ can be rewritten as the Cartesian product of a finite number of simply connected symmetric spaces $S_1,\cdots, S_N$ of irreducible type and a Euclidean space $S_0$, i.e., $$S=\left(\displaystyle\prod_{\iota=1}^{N}S_{\iota}\right)\times S_0=\left(S_1\times\cdots\times S_N\right)\times S_0.$$ Moreover, the isometry group  $I_0(S)$ of $S$ is also decomposed as $I_0(S)=\left(\displaystyle\prod_{\iota=1}^{N}I_0(S_{\iota})\right)\times I_0(S_0)$, where $I_0(S_{\iota})$ is the isometry group of $S_{\iota} (\ \iota=0,1,\cdots,N$). \label{Rham}  
\end{teorema}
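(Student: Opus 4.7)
The plan is to deduce this from the general de Rham decomposition theorem for simply connected complete Riemannian manifolds, then promote the splitting from the manifold level to the isometry group level using the symmetric structure. First I would invoke completeness of $S$ (automatic for symmetric spaces, since one-parameter subgroups of transvections are defined for all time) and use simple connectedness so that the hypotheses of the standard de Rham theorem are in force: any parallel, orthogonal decomposition of the tangent space at a point extends to a global isometric splitting of $S$ as a Riemannian product.

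The next step would be to produce the right tangent-space decomposition. Fix a base point $o\in S$ and let $K$ be the isotropy subgroup, with associated Cartan decomposition $\mathfrak g=\mathfrak k\oplus\mathfrak p$ of the Lie algebra of $I_0(S)$, where $\mathfrak p\cong T_oS$. The linear isotropy representation $\mathrm{Ad}(K)|_{\mathfrak p}$ coincides (for symmetric spaces) with the restricted holonomy representation, so it suffices to decompose $\mathfrak p$ into $\mathrm{Ad}(K)$-irreducible summands. Writing $\mathfrak p=\mathfrak p_0\oplus\mathfrak p_1\oplus\cdots\oplus\mathfrak p_N$, where $\mathfrak p_0$ is the fixed subspace of $\mathrm{Ad}(K)$ and each $\mathfrak p_\iota$ ($\iota\ge 1$) is a nontrivial irreducible summand, the de Rham theorem then yields the global decomposition $S=S_0\times S_1\times\cdots\times S_N$. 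The flat factor $S_0$ corresponds to $\mathfrak p_0$ (trivial holonomy, hence Euclidean), while each $S_\iota$ ($\iota\ge 1$) is an irreducible symmetric space because its isotropy representation is, by construction, irreducible.

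For the group-level statement, the easy inclusion $\prod_\iota I_0(S_\iota)\subseteq I_0(S)$ is immediate: componentwise isometries preserve the product metric and lie in the identity component. The reverse inclusion is the main obstacle. Here I would argue that the de Rham decomposition is canonical once one groups together mutually isometric irreducible factors, so any isometry of $S$ permutes factors within each isometry class and acts by an isometry on each; restricting to the identity component kills the permutation part, leaving a pure product of isometries of the individual $S_\iota$. For symmetric $S$ one can make this cleaner by observing that the Lie algebra $\mathfrak g$ itself decomposes along the Cartan components as $\mathfrak g=\bigoplus_\iota([\mathfrak p_\iota,\mathfrak p_\iota]\oplus\mathfrak p_\iota)$ with the summands being ideals of $\mathfrak g$, so that $I_0(S)$ splits as the corresponding direct product of connected Lie groups.

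The delicate point, and the one I would spend the most care on, is the uniqueness aspect of the de Rham decomposition: showing that the splitting of $\mathfrak p$ into $\mathrm{Ad}(K)$-irreducibles is intrinsic to $S$ (not just to a choice of base point or frame) and that the ideal structure on $\mathfrak g$ follows from the symmetric-space bracket relations $[\mathfrak k,\mathfrak k]\subseteq\mathfrak k$, $[\mathfrak k,\mathfrak p]\subseteq\mathfrak p$, $[\mathfrak p,\mathfrak p]\subseteq\mathfrak k$, combined with irreducibility of each $\mathfrak p_\iota$. Everything else reduces to routine bookkeeping once this is in hand.
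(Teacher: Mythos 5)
The paper does not prove this theorem: it is imported verbatim from Sakai's textbook (cited as \cite[p.~180]{Sakai}) and used as a black box, so there is no in-paper argument to compare yours against. Judged on its own, your sketch follows the standard route (the one in Sakai, Helgason, and Kobayashi--Nomizu): completeness plus simple connectedness puts the general de Rham splitting theorem in force, the parallel decomposition of $T_oS$ is produced from the isotropy/holonomy representation with $\mathfrak{p}_0$ the fixed subspace giving the flat factor, and the group statement is handled by canonicity of the decomposition plus the identity-component argument that kills permutations of mutually isometric irreducible factors. That outline is correct and is essentially how the quoted source proceeds.

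Two points deserve more care than your sketch gives them. First, the identification of $\mathrm{Ad}(K)\vert_{\mathfrak p}$ with the restricted holonomy representation is exact only when $K$ is the isotropy group of the transvection group (whose isotropy algebra is $[\mathfrak p,\mathfrak p]$); for the full isometry group $K$ can be strictly larger, so you should either work with the transvection group throughout or note that a finer $\mathrm{Ad}(K)$-decomposition still refines to the holonomy decomposition you need. Second, and more substantively, the proposed ideal decomposition $\mathfrak g=\bigoplus_\iota\bigl([\mathfrak p_\iota,\mathfrak p_\iota]\oplus\mathfrak p_\iota\bigr)$ cannot literally yield $I_0(S)=\prod_\iota I_0(S_\iota)\times I_0(S_0)$: for the Euclidean factor one has $[\mathfrak p_0,\mathfrak p_0]=0$, so that summand is only the translation algebra of $S_0$, not the full $\mathfrak{i}(S_0)=\mathbb{R}^n\rtimes\mathfrak{so}(n)$. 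The group-level claim should instead be established by showing that the Lie algebra of Killing fields of the Riemannian product splits as the direct sum of the Killing algebras of the factors (which uses that no $S_\iota$ with $\iota\geq 1$ has a parallel vector field, i.e.\ all flatness has been absorbed into $S_0$), and then passing to identity components. With those two repairs the argument is sound.
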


The current algorithm is applicable to estimation of  minimizers for convex functions in homogeneous domains of positivity of \textit{specially reducible} type. Specially reducible domains are introduced and largely discussed in the next section. Specifically, this work discourses about aspects such as the establishment of the method, the convergence of their inner iterations and the global convergence for both  exact and inexact versions.  

In the following, an extensive narrative that reports to theoretical aspects, algorithms and applications from the related literature which have inspired the development of the current method is presented. The next section is dedicated to the establishment of both the problem and the technique, by ensuring  well-posedness and convergence of their inner iterations. Furthermore, the global convergence for the exact version to the method is still discussed at the end of the Section 2. Section 3 discourses about the inexact version to the technique. There, it is showed that globally the algorithm in \cite{Gregorio} and the current technique derive from the same extension proposed in \cite{Ferreira2}. A particular analysis on the lower bound to the number of outer iterations for the method under certain assumptions closes the Section 3. Section 4 finishes the main content of the paper with remarks about practical aspects related to the implementation of the method. 
 
\subsection{Related works}

Synthesizing extensions of optimization methods to Riemannian manifolds can be as complicated as the establishment of new tools. In general, extensions are not easy to be obtained formally. Their convergences do not derive from a straightforward argumentation. Nothing different than naturally happens with classical optimization methods in Euclidean spaces where preliminary auxiliary results are separately ensured before to establish the main convergence theorem. Besides, a deep knowledge on Topology of Manifolds and an arduous study on concepts from  Differential Geometry are demanded by anyone who is interested in optimization on manifolds. For instance, the methods in \cite{Ferreira1}, \cite{Smith} and \cite{Udriste} exemplify nontrivial extensions of classical optimization techniques to complete Riemannian manifolds.

Although recent, some extensions have already been reason of inspiration to sophisticated optimization methods. Indeed, the replacement of the Riemannian distance in the definition of the extended proximal point iteration in \cite{Ferreira2} by Bregman distances enabled authors in \cite{Quiroz} to develop a new class of Riemannian proximal point methods for generalized convex optimization problems on noncompact Hadamard manifolds. In other cases, they enable the development of specific tools for particular  manifolds. For instance, authors in \cite{Gregorio}  used the extension in \cite{Ferreira2} to develop a proximal point technique able to estimate minimizers for convex functions whose domain is the cone of $n$-by-$n$ symmetric positive semidefinite matrices and  whose iterations belong to the interior of this cone. Namely, the interior of the cone of $n$-by-$n$ symmetric positive semidefinite matrices is denoted by $\mathbb{P}_{n}$ and it is called \textit{manifold of $n$-by-$n$ symmetric positive definite matrices}. See \cite{Bhatia} for a long discussion about the Riemannian structure to $\mathbb{P}_{n}$.    

Developments with symmetric positive definite matrices that regards the Riemannian geometry to $\mathbb{P}_{n}$ are often found in the scientific literature. As known, $\mathbb{P}_{n}$ is a particular homogeneous domain of positivity and, when it is provided with an appropriate Riemannian metric, it is a manifold of Hadamard type. So, it justifies the interest of researchers from areas like Optimization on Manifolds in that cone. 

Metric, geodesics and other Riemannian features in $\mathbb{P}_{n}$ have already been highlighted by a large number of authors. For instance, see \cite{Bhatia}, \cite{Moakher} and \cite{Nesterov}. Analytical expressions to geodesics in $\mathbb{P}_{n}$ are uniquely synthesized whether initial conditions are prefixed for example. Indeed, denote by $\mathbb{S}_{n}$ the vector space of symmetric matrices. Given a starter point $x\in\mathbb{P}_{n}$ and a velocity $s\in\mathbb{S}_{n}$, the expression to the geodesic $\gamma$ in $\mathbb{P}_{n}$ that passes by $x$ with velocity $s$ is given by  
$$\gamma(t)=x^{\frac{1}{2}}e^{tx^{-\frac{1}{2}}sx^{-\frac{1}{2}}}x^{\frac{1}{2}}, \ \ t\in \mathbb{R}.$$
The parametrization to the curve $\gamma_{xy}$ of the smallest size that connects a pair of points $x,y\in\mathbb{P}_{n}$ has a closed rule as happens in the Euclidean case. It is given by   
$$\gamma_{xy}(t)=x^{\frac{1}{2}}(x^{-\frac{1}{2}}yx^{-\frac{1}{2}})^tx^{\frac{1}{2}}, \ \ t\in [0,1].$$ 
Moreover, the distance between $x$ and $y$ resulting from the Riemannian measure to arc length of curves $$d(x,y)=\displaystyle\inf_{c\in{\cal C}_{xy}}\int^{1}_{0}\|\dot{c}(t)\|_{c(t)}dt$$ is achieved by the length of the geodesic segment $\gamma_{xy}$, where ${\cal C}_{xy}$ is the set of all regular curves $c$ in $\mathbb{P}_{n}$  that connect $x$ to $y$, i.e., $c(0)=x$, $c(1)=y$ and $c(t)\in\mathbb{P}_{n}$, with $\dot{c}(t)\not={\bf 0}$\footnote{${\bf 0}$ denotes the null vector from the space of $n$-by-$n$ symmetric matrices}, for every $t\in(0,1)$. Particularly, the length of $\gamma_{xy}$ is given by
$$d(x,y)=\sqrt{\sum_{\iota=1}^{n}{\rm ln}^2 \ \lambda_{\iota}(x^{-\frac{1}{2}}yx^{-\frac{1}{2}})},$$ 
where $\lambda_{\iota}(x^{-\frac{1}{2}}yx^{-\frac{1}{2}})$ is the $\iota$th eigenvalue of $x^{-\frac{1}{2}}yx^{-\frac{1}{2}}$ ($\iota=1,\cdots, n$).

Exponential and logarithmic functions of matrices above do not seem natural to starters or researchers from fields which do not intersect target areas discussed in this paper. However, they are naturally disseminated by authors from areas like Differential Geometry, specially those who investigate Matrix Manifolds. Working with real functions of symmetric matrices for example is so easier than one can imagine. The Schur decomposition for symmetric matrices is claimed to this aim. Indeed, let $h:\mathbb{R}\to\mathbb{R}$ be any real function. Since  for every $x\in\mathbb{S}_{n}$ there exist a $n$-by-$n$ orthogonal matrix $w$ and a $n$-by-$n$ real diagonal matrix $\lambda$ such that $x=w\lambda w^T$, $h(x)=wh(\lambda)w^T$, where $h(\lambda)$ is the real diagonal matrix whose diagonal elements are of the form $[h(\lambda)]_{\iota\iota}=h(\lambda_{\iota\iota})$  ($\iota=1,\cdots, n$). See \cite{Golub} for details about definition and properties to functions of matrices.   

A key concept in global optimization is convexity. Let $\mathbb{E}$ be any finite dimensional Euclidean space. Here, a subset $C$ of a complete Riemannian manifold $\mathbb{M}\subset \mathbb{E}$ is called convex if it contains all minimal geodesic segments connecting any pair of their points. Besides, a function $f:C\to\mathbb{R}$ is said to be convex if 
$$(f\circ\gamma_{xy})((1-t)\cdot t_1+t\cdot t_2)\leq (1-t)\cdot f(x)+t\cdot f(y),$$
for every $x,y\in C$ and $t\in [0,1]$, where $\gamma_{xy}$ is any minimal geodesic segment in $C$ satisfying $\gamma_{xy}(t_1)=x$ and $\gamma_{xy}(t_2)=y$. Moreover, $f$ is said to be strictly convex if the inequality above is strict, for every $t\in (0,1)$.  See \cite{Sakai} and \cite{Udriste} for definition and examples of convex functions in Riemannian manifolds. In particular, for manifolds of Cartan-Hadamard type, see \cite{Shiga}. That last paper discourses about Hadamard manifold only. So, it is intrinsically related to the current work since here only  homogeneous domains of positivity are considered and, as shown in \cite{Rothaus}, they are Riemannian manifolds of sectional curvature everywhere nonpositive.

Almost all algebraical and topological properties for convex sets and functions as well as differential properties for convex functions in euclidean spaces have already been extended to convex sets and functions in complete Riemannian manifolds. For instance, subdifferentials of a convex functions remain nonempty at any point from its domain and a minimum is characterized as that point where the subdifferential contains the null vector. Namely, the subdifferential of a function $f$, at $x\in C$, denoted by $\partial f(x)$, is the set defined by 
\begin{equation}
\partial f(x):=\{s\in\mathbb{E}:f(y)\geq f(x)+\langle s,{\rm exp}^{-1}_xy \rangle_{x},\forall y\in C\}, \label{subdiferencial}
\end{equation}
where ${\rm exp}^{-1}_xy$ is the velocity of any minimal geodesic segment that connects $x$ to $y$ in $C$, at $x$, and $\langle,\rangle_{x}$ is the Riemannian metric on $\mathbb{M}$. Any element of $\partial f(x)$ is called a \textit{subgradient} to $f$, at $x$. If $f$ is differentiable then $\partial f(x)$ is an unitary set that contains only the Riemannian gradient\footnote{The Riemannian gradient of $f$, at $x$, denoted by ${\rm grad} \ f(x)$, is defined as the vector field that is metrically equivalent to $\nabla f(x)$, i.e., $\langle {\rm grad} \ f(x),s\rangle_{x}=\langle \nabla f(x),s\rangle$, for every $s\in T_x\mathbb{M}$, where $T_x\mathbb{M}$ is the tangent plane to $\mathbb{M}$, at $x$. If $G$ is the symmetric positive definite bi-linear form for which $\langle s_1,s_2\rangle_{x}=\langle G(x)s_1,s_2\rangle$, for every $x\in\mathbb{M}$ and $s_1,s_2\in T_x\mathbb{M}$, then ${\rm grad} \ f(x)=[G(x)]^{-1}\nabla f(x)$.} of $f$, at $x$, for any $x\in C$. See \cite{Udriste} for further details about convex analysis on Riemannian manifolds.

The state of the art in convex optimization methods is the proximal point algorithm introduced in \cite{Martinet} and extended to operators in \cite{Rockafellar}. In particular, the proximal point algorithm in \cite{Ferreira2} is also an extension of that method but now to Hadamard manifolds. It can be described as follows: admit that $\mathbb{M}$ is now a Hadamard manifold. Denote by ${\rm arg}\displaystyle\min_{x\in \mathbb{M}} \ f(x)$ the set of local minimizers  to $f$ in $\mathbb{M}$, for any arbitrary function $f:\mathbb{M}\to\mathbb{R}$. Also, admit that $f:\mathbb{M}\to \mathbb{R}$ is convex. Given  $x_{0}\in \mathbb{M}$ and a bounded sequence $\{\beta_{0}\}$ of positive real numbers, the proximal point algorithm generates a sequence $\{x_{k}\}$ defined by the iteration
\begin{equation}
x_{k+1}={\rm arg}\min_{y\in\mathbb{M}}\left\{f(y)+\frac{\beta_{k}}{2}d^2(y,x_{k})\right\} \ \ (k=0,1,\cdots), \label{iteracao_principal}
\end{equation}
for which $\{f(x_{k})\}$ converges to $\displaystyle\inf_{x\in\mathbb{M}} f(x)$. Moreover, $\{x_{k}\}$ converges to any point from ${\rm arg}\displaystyle\min_{x\in \mathbb{M}} \ f(x)$ whether it is nonempty.
 
An interesting variant from that method was proposed in \cite{Gregorio} to compute minimizers of convex functions in the closure of $\mathbb{P}_{n}$. It is based on the Schur decomposition theorem for symmetric positive definite matrices. Denote by $\mathbb{O}_n$ and $\mathbb{D}_{n}$ the sets of $n$-by-$n$ orthogonal and diagonal positive definite matrices, respectively. Let $f:\overline{\mathbb{P}_{n}}\to\mathbb{R}$ be convex in $\mathbb{P}_{n}$. Define $\phi_{w}:\mathbb{D}_{n}\to\mathbb{R}$ by $$\phi_{w}(\lambda)=f(x^{k^\frac{1}{2}}w\lambda w^Tx^{k^\frac{1}{2}})$$ and $\rho_{w}:\mathbb{D}_{n}\to\mathbb{R}$ by $$\rho_{w}(\lambda)=\frac{1}{2} d^2(x_{k},x^{k^\frac{1}{2}}w\lambda w^Tx^{k^\frac{1}{2}}),$$ where $x_{k}$ is the $k$th iterated from the proximal point algorithm in \cite{Ferreira2} and $w\in\mathbb{O}_{n}$ is prefixed. The method in \cite{Gregorio} computes $x_{k+1}$, for any $k$ ($k=0,1,\cdots$), through the following steps:
\begin{itemize}
\item[($S_1$)] \textbf{input} $w_{0}\in \mathbb{O}_n$, $\lambda_{0}\in \mathbb{D}_{n}$ and \textbf{set} $j=0$;
\item[($S_2$)] \textbf{compute} $\overline{\lambda}_{j+1}={\rm arg}\displaystyle\min_{\lambda\in \mathbb{D}_{n}} \left\{\phi_{j}(\lambda)+\beta_{k} \rho_{j}(\lambda)\right\},$ \textbf{where} $\phi_{j}\equiv\phi_{w_{j}}$ and $\rho_{j}\equiv\rho_{w_{j}}$;  
\item[($S_3$)] \textbf{compute} $w_{j+1}\in \mathbb{O}_n$ and $\lambda_{j+1}\in\mathbb{D}_{n}$ such that $w_{j+1}^Tw_{j}\overline{\lambda}_{j+1}w_{j}^Tw_{j+1}=\lambda_{j+1};$ 
\item[($S_4$)] \textbf{update} $j=j+1$ and return to ($S_2$).  
\end{itemize}

Proposition 2 in \cite[p. 475]{Gregorio} shows that the sequence $\{y_{j}\}$, defined by the iteration $$y_{j}=x^{k^\frac{1}{2}}w_{j}\lambda_{j} w_{j}^Tx^{k^\frac{1}{2}},$$ converges to $x_{k+1}$ if $\overline{\lambda}_{j+1}\not=\lambda_{j}$, for every $j$ ($j=0,1,\cdots$). Still, if $\overline{\lambda}_{j+1}=\lambda_{j}$ then $w_{j+1}$ can be updated as the own iterated $w_{j}$ and the method stops prematurely. Also in \cite{Gregorio} is proposed permutations on the diagonal elements of $\lambda_{j}$ to overcome that obstacle. It can be achieved by permuting the columns of $w_{j}$ for example. As known, matrices of permutation are orthogonal and product of orthogonal matrices become orthogonal. Thus, by setting $w_{j+1}=w_{j}\cdot p$, for any matrix of permutation $p\in\mathbb{O}_{n}$, and $\lambda_{j+1}=w_{j+1}^Tw_{j}\lambda_{j}w_{j}^Tw_{j+1}$, it is hoped that the algorithm restarts. However, in the worst case this can not be performed in less than $n!$ attempts since a total of $n!$ permutations can be made with columns of $w_{j}$. Other disadvantages are pointed in the topics bellow. 
\begin{enumerate}
\item[($T_1$)] The step ($S_3$) does not produce decreases in the regularized objective function at (\ref{iteracao_principal}). Indeed, by ($S_3$), $w_{j}\bar{\lambda}_{j+1}w^T_{j}=w_{j+1}\lambda_{j+1}w^T_{j+1}$ . It follows that $$y_{j+1}=x^{k^{\frac{1}{2}}}w_{j}\bar{\lambda}_{j+1}w^T_{j}x^{k^{\frac{1}{2}}}=x^{k^{\frac{1}{2}}}w_{j+1}\lambda_{j+1}w^T_{j+1}x^{k^{\frac{1}{2}}}=\overline{y}_{j+1},$$ since $x_{k}$ is nonsingular. So, the value of the regularized objective function in (\ref{iteracao_principal}) at  $y_{j+1}$ is exactly the same as at $\overline{y}_{j+1}$. 
\item[($T_2$)] The updating from $w_{j}$ to $w_{j+1}$ in ($S_3$) does not use any information about $f$ as well as any feature  about the Riemannian structure of $\mathbb{O}_n$. For instance, the natural geometry for $\mathbb{O}_n$ is strongly explored in \cite{Fiori2}.
\item[($T_3$)] Each Schur decomposition in ($S_3$) has high computational cost whether $n$ is large.   
\end{enumerate}

Now, define $\varphi_{x}:\mathbb{D}_{n}\times \mathbb{O}_{n} \to \mathbb{P}_{n}$ by $$\varphi_{x}(\lambda, w)=x^{\frac{1}{2}}w\lambda w^Tx^{\frac{1}{2}},$$ for any $x\in\mathbb{P}_{n}$. It is easily seen that $\varphi_{x}$ is onto since $x$ is nonsingular. Note that $\varphi_{x}\equiv T_{x}\circ\varphi$, where $T_{x}$ is the nonsingular linear operator on the space of $n$-by-$n$ matrices, defined by $$T_{x}(y)=x^{\frac{1}{2}}yx^{\frac{1}{2}},$$ that maps $\mathbb{P}_{n}$ onto itself and $\varphi:\mathbb{D}_{n}\times \mathbb{O}_{n}\to \mathbb{P}_{n}$, given by $\varphi(\lambda,w)=w\lambda w^T$, is onto. Furthermore, $\mathbb{O}_{n}$ is a nonconnected compact Lie group. Its Riemannian structure have already been employed in applications related to Independent Component Analysis. Besides \cite{Fiori2}, \cite{Nishimori} also discourses about those applications. On the other hand, extensions of classical optimization methods to that manifolds have already been proposed too.  The commonest is an adaptation of the geodesic gradient method introduced in \cite{Luenberger}. 

\subsection{Applications} 

The growing number of tools related to optimization in Hadamard manifolds is justified by the increase of mathematical models for real problems based on those structures. Particularly, advances in mathematical theories have evidenced $\mathbb{P}_{n}$ as a manifold of Hadamard type for a special choice of Riemannian metric. See \cite{Nesterov} for further explanations. In addition, Signal Processing and Computer Vision are examples of fields that aggregate models based on symmetry and positive definiteness.        

Diffusion tensor imaging (DTI) is by far the most relevant field of application to symmetry and positive definiteness. Any numerical representation of 3D image in DTI is done by a large matrix with the property that all 3-by-3 submatrices, named voxels, are symmetric and positive definite. Specifically in DTI, image cleaning and smoothing procedures are frequently demanded since noises often occur in the caption of images by magnetic resonance machines. Some outer interference signs introduce noises on the image, represented by deficient voxels (asymmetric or indefinite 3-by-3 matrices) which make smudge in its visualization. So, it is used to minimize the variance of the imaging data from the noisy region.

The procedure above is known as \textit{weighted mean filtering}. Other kinds of filtering as those made by \textit{median} have already been proposed. Nowadays, filtering is classified in two groups, one that taking into account the Euclidean structure to $\mathbb{P}_{3}$ and the other, the Riemannian one. Recent papers show that Riemannian filtering has proven to be more efficient than the Euclidean one once Riemannian filters are generally smooth, with respect to the natural structure to $\mathbb{P}_{3}$. It is largely discussed by several authors. See  \cite{Casta}, \cite{Fiori1}, \cite{Fletcher} and  \cite{Zhang}. Weighted mean and median are the most relevant tools studied for this purpose and they are defined as solutions of smooth optimization problems in $\mathbb{P}_{3}$. The weighted mean is used by Gaussian smoothing in Riemannian filtering. This is described with details in \cite{Pennec}. Namely, the weighted mean is defined as the solution to the following smooth optimization model
$$
\begin{array}{cc}
{\rm minimize} & \frac{1}{2}\sum_{i=1}^{m}\omega_{i}d^2(x,x_{i})\\
{\rm subject \ to} & x\in \mathbb{P}_{3},
\end{array}
$$
and the median, as solution of another similar model, also smooth, given by  
$$
\begin{array}{cc}
{\rm minimize} & \sum_{i=1}^{m}d(x,x_{i})\\
{\rm subject \ to} & x\in \mathbb{P}_{3},
\end{array}
$$
where $m$ represents the amount of neighbors from the noisy voxel used to compute the weighted mean or the median. $\omega_{i}$ is a weight associated to the voxel $x_{i}$  ($i=1,\cdots, m,$) and $d:\mathbb{P}_{3}\times \mathbb{P}_{3}\to \mathbb{R}_{+}$ is the Riemannian distance resulting from the  metric defined by the Hessian of the standard logarithmic barrier to Semidefinite Programming. Several other applications are cited in \cite{Fiori1} nevertheless it is relevant to highlight  \textit{human detection via classification} which is discussed in details in \cite{Tuzel}. Covariances of symmetric positive definite matrices in $\mathbb{P}_5$ is also demanded by that kind of application.     

Another matrix manifold that keeps similarities with $\mathbb{P}_{n}$ is the set of $n$-by-$n$  Hermitian positive definite matrices. Here, it is denoted by $\mathbb{H}_{n}$. As a matter of fact, $\mathbb{H}_{n}$ extends all properties of $\mathbb{P}_{n}$ to the complex case. Hermitian positive definite matrices have application in studies related to modeling of covariance matrices and trigonometric moments of nonnengative functions for example. These applications are briefly described in \cite{Horn}. Covariance on $\mathbb{H}_{n}$, for example, also involves the computing of averages. An average is defined in a Riemannian sense as a solution of a smooth optimization problem on $\mathbb{H}_{n}$ in terms of the \textit{Karcher mean problem}. In the practice, the Karcher mean problem in $\mathbb{H}_{n}$ is built by choosing an appropriate Riemannian metric. The general Riemannian metric introduced in \cite{Rothaus} to homogeneous domains of positivity points how it can be made. 

\section{Problem statement and algorithm} 
This section starts with a list of preliminary concepts involving domains of positivity. All of them were adapted from \cite{Rothaus}. In addition, the meaning to the term \textit{specially reducible} is presented too. After that, the problem and the current technique are stated by describing different aspects as convergence of the inner iterations of the method as well as its global convergence.

Accurately, a domain of positivity is an open self-dual convex cone in any finite dimensional Euclidean space. See \cite{Koecher} for a synthesis on the theme. Moreover, according to \cite{Barut}, homogeneous domains of positivity are symmetric spaces with additional properties presented in the following. 
    
\label{secao_2}

\subsection{Preliminary}
Let $D$ be a nonempty open set in $\mathbb{E}$ and $\sigma:\mathbb{E}\times\mathbb{E}\to \mathbb{R}$ a nonsingular symmetric bi-linear form. 
\begin{definicao}
$D$ is called domain of positivity, with respect to $\sigma$, if the following axioms are guaranteed:
\begin{itemize}
\item[($A_1$)] $\sigma(a_1,a_2)>0$, for every $a_1,a_2\in D$;
\item[($A_2$)] if $a\in \mathbb{E}$ is such that $\sigma(a,\bar{a})>0$, for every $\bar{a}\in\overline{D}$ $(\bar{a}\not=0)$, where $\overline{D}$ is the closure of $D$, then $a\in D$.
\end{itemize}
Namely, $\sigma$ is called \textit{characteristic} of $D$.
\end{definicao}

Let ${\rm Tr}:\mathbb{R}^{n\times n}\to \mathbb{R}$ denote the trace function of square matrices. By putting $\sigma(x,y)={\rm Tr} \ \{xy^{*}\}$ (${\rm Tr} \ \{xy^{T}\}$), for any $x,y\in\mathbb{H}_n$ ($\mathbb{P}_{n}$), it follows that $\mathbb{H}_n (\mathbb{P}_n)$  is a domain of positivity with respect to $\sigma$. Indeed, it is ensured by the next results. The proofs of the first three lemmas are omitted once they are integrally found in \cite{Horn}, more precisely in the proof of Theorem 7.2.7, Corollary 7.2.11 and Theorem 7.5.3 in \cite[p. 406, 408, 458, resp.]{Horn}.
\begin{lema}
A matrix $x\in\mathbb{C}^{n\times n}$ ($\mathbb{R}^{n\times n}$) is positive definite iff there is a nonsingular matrix $y\in\mathbb{C}^{n\times n}$ ($\mathbb{R}^{n\times n}$) such that $x=y^{*}y$ ($x=y^Ty$).\label{Cholesky}
\end{lema}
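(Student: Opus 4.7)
The plan is to establish both directions of the equivalence using standard linear algebra. I carry out the complex case $\mathbb{C}^{n\times n}$; the real case follows identically after replacing the conjugate transpose by the transpose and invoking orthogonal rather than unitary diagonalization.

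The reverse direction is the short one. Assume $x=y^{*}y$ with $y$ nonsingular. Hermiticity drops out of $x^{*}=(y^{*}y)^{*}=y^{*}y=x$. For positive definiteness, pick any nonzero $v\in\mathbb{C}^{n}$ and compute
\[
v^{*}xv \;=\; v^{*}y^{*}yv \;=\; \|yv\|^{2}.
\]
Since $y$ is nonsingular, $v\neq 0$ forces $yv\neq 0$, hence $v^{*}xv>0$.

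For the forward direction, suppose $x$ is positive definite; then $x$ is Hermitian with strictly positive spectrum, and the spectral theorem supplies a unitary $u$ and a real diagonal matrix $\Lambda=\mathrm{diag}(\lambda_{1},\dots,\lambda_{n})$ with $\lambda_{\iota}>0$ such that $x=u\Lambda u^{*}$. Put $\Lambda^{1/2}:=\mathrm{diag}(\sqrt{\lambda_{1}},\dots,\sqrt{\lambda_{n}})$ and define $y:=\Lambda^{1/2}u^{*}$. A direct computation gives $y^{*}y=u\Lambda^{1/2}\Lambda^{1/2}u^{*}=u\Lambda u^{*}=x$, and nonsingularity of $y$ is clear from $\det y=\det\Lambda^{1/2}\cdot\det u^{*}\neq 0$.

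The argument is essentially routine and I do not anticipate a genuine obstacle. The only bookkeeping point is conceptual: depending on the convention, positive definiteness is either taken to include Hermiticity by definition or to imply it; the reverse direction above verifies Hermiticity explicitly, so the lemma is meaningful under either convention. Note also that the factorization $x=y^{*}y$ is not unique, since replacing $y$ by $qy$ for any unitary $q$ yields another valid choice; the construction $y=\Lambda^{1/2}u^{*}$ is merely one convenient witness.
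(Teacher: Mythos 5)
Your proof is correct and follows the standard route: the paper itself omits the argument, deferring to Theorem~7.2.7 of Horn and Johnson, whose proof is precisely the spectral-decomposition square-root construction you use for the forward direction together with the direct computation $v^{*}y^{*}yv=\|yv\|^{2}>0$ for the converse. Your closing remark on whether Hermiticity is assumed or derived is well taken; in the complex case positivity of $v^{*}xv$ for all $v$ forces Hermiticity, which is exactly the point the paper draws from this lemma.
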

Lemma \ref{Cholesky} asserts that positive definite matrices are necessarily Hermitian (symmetric).  
\begin{lema}
Let $x\in\mathbb{C}^{n\times n}$ ($\mathbb{R}^{n\times n}$) be an arbitrary square matrix. $x$ is positive semidefinite with rank $R\leq n$ iff there is a set of vectors $\{u_1,\cdots,u_n\}\subset \mathbb{C}^{n}$ ($\mathbb{R}^{n}$), containing exactly $R$ linearly independent vectors such that $$x_{\iota\tilde{\iota}}=\langle u_{\iota},u_{\tilde{\iota}}\rangle \ \ (\iota,\tilde{\iota}=1,\cdots, n).$$\label{Gram}
\end{lema}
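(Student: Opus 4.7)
The plan is to identify the vectors $u_1,\ldots,u_n$ with the columns of a matrix $y\in\mathbb{C}^{n\times n}$ for which $x=y^{*}y$ (or $x=y^Ty$ in the real case), thereby reducing the lemma to the classical fact that $\mathrm{rank}(y^{*}y)=\mathrm{rank}(y)$. Under this identification the Gram condition $x_{\iota\tilde{\iota}}=\langle u_{\iota},u_{\tilde{\iota}}\rangle$ is precisely the entrywise reading of $x=y^{*}y$, while the ``$R$ linearly independent vectors'' clause is the statement $\mathrm{rank}(y)=R$.

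For the forward direction I would assume $x$ is positive semidefinite of rank $R$. Since positive semidefiniteness forces Hermitian symmetry (by Lemma \ref{Cholesky}), the spectral theorem provides $x=Q\Lambda Q^{*}$ with $Q$ unitary and $\Lambda$ diagonal with exactly $R$ positive entries and $n-R$ zero entries. Setting $y=\Lambda^{1/2}Q^{*}$ then yields $y^{*}y=Q\Lambda Q^{*}=x$, and only $R$ rows of $y$ are nonzero, so $\mathrm{rank}(y)=R$. Taking $u_{\iota}$ to be the $\iota$th column of $y$, the matrix product identity gives $x_{\iota\tilde{\iota}}=(y^{*}y)_{\iota\tilde{\iota}}=\langle u_{\iota},u_{\tilde{\iota}}\rangle$, and the span of $\{u_1,\ldots,u_n\}$ has dimension $R$.

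For the converse I would assemble $y\in\mathbb{C}^{n\times n}$ with $u_{\iota}$ as its $\iota$th column, so that $x=y^{*}y$ by entrywise comparison. Positive semidefiniteness follows from $v^{*}xv=v^{*}y^{*}yv=\|yv\|^2\ge 0$ for every $v\in\mathbb{C}^{n}$, and the rank of $x$ equals the rank of $y$, which is $R$ by hypothesis. The real case is identical with $y^{*}$ replaced by $y^{T}$ throughout.

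The only substantive (and modest) obstacle is the rank identity $\mathrm{rank}(y^{*}y)=\mathrm{rank}(y)$, which I would derive from $\ker(y^{*}y)=\ker(y)$: if $y^{*}yv=0$ then $\|yv\|^2=v^{*}y^{*}yv=0$, forcing $yv=0$, while the reverse inclusion is immediate, and rank-nullity finishes the job. An alternative avoiding the spectral theorem would be to locate an $R\times R$ invertible principal submatrix of $x$, apply Lemma \ref{Cholesky} to it, and pad the resulting factor with zero columns; but the spectral route is cleaner and dovetails with the Schur-decomposition viewpoint on matrix functions already emphasized earlier in the paper.
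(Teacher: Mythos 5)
Your proof is correct and is essentially the standard argument: the paper itself omits the proof of Lemma \ref{Gram}, deferring to Corollary 7.2.11 of Horn and Johnson, and that reference proceeds exactly as you do, via a factorization $x=y^{*}y$ with the $u_{\iota}$ as columns of $y$ together with the identity $\mathrm{rank}(y^{*}y)=\mathrm{rank}(y)$ proved through $\ker(y^{*}y)=\ker(y)$. The only cosmetic slips are that Lemma \ref{Cholesky} as stated concerns positive \emph{definite} matrices, so the Hermitian symmetry of a positive semidefinite $x$ should instead be taken from the definition (or argued directly from $v^{*}xv\in\mathbb{R}$ for all $v$ in the complex case), and that ``only $R$ rows of $y=\Lambda^{1/2}Q^{*}$ are nonzero'' gives $\mathrm{rank}(y)\leq R$ a priori, equality following because those rows are scaled rows of a unitary matrix; neither affects the validity of the argument.
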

\begin{lema}
Let $x,y\in\mathbb{C}^{n\times n}$ ($\mathbb{R}^{n\times n}$) be arbitrary square matrices. Denote by $x\odot y$ the Hadamard product of $x$ and $y$, i.e., $(x\odot y)_{\iota\tilde{\iota}}=x_{\iota\tilde{\iota}}y_{\iota\tilde{\iota}}$, ($\iota,\tilde{\iota}=1,\cdots, n$). If $x$ and $y$ are positive definite then so is $x\odot y$.  \label{Hadamard}
\end{lema}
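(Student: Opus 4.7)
The plan is to reduce positive definiteness of $x\odot y$ to positive definiteness of $x$ by factoring $y$. First I would apply Lemma~\ref{Cholesky} to $y$: there exists a nonsingular matrix $B$ with $y=B^{*}B$ (replace $B^{*}$ by $B^{T}$ in the real case), so that entrywise
$$y_{\iota\tilde{\iota}}=\sum_{k=1}^{n}\overline{b_{k\iota}}\,b_{k\tilde{\iota}}.$$
This rewriting is the key maneuver, because it turns the ``entrywise multiplication'' operation into a superposition of rank-one contributions from the rows of $B$, allowing the Hadamard product to be reshaped in terms of $x$.

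Next I would take an arbitrary nonzero vector $v\in\mathbb{C}^{n}$ (resp. $\mathbb{R}^{n}$) and compute the quadratic form of $x\odot y$ by substituting the above:
$$v^{*}(x\odot y)v=\sum_{\iota,\tilde{\iota}=1}^{n}\overline{v_{\iota}}\,x_{\iota\tilde{\iota}}\,y_{\iota\tilde{\iota}}\,v_{\tilde{\iota}}=\sum_{k=1}^{n}\sum_{\iota,\tilde{\iota}=1}^{n}\overline{v_{\iota}b_{k\iota}}\,x_{\iota\tilde{\iota}}\,(v_{\tilde{\iota}}b_{k\tilde{\iota}})=\sum_{k=1}^{n}w_{k}^{*}\,x\,w_{k},$$
where the vectors $w_{k}$ are defined componentwise by $(w_{k})_{\iota}:=v_{\iota}b_{k\iota}$. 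Each summand is nonnegative by positive definiteness of $x$, and vanishes only if $w_{k}=0$.

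Finally I would establish \emph{strict} positivity. It suffices to exhibit one index $k$ for which $w_{k}\neq 0$. Suppose by contradiction that $v_{\iota}b_{k\iota}=0$ for every pair $(k,\iota)$. Pick any $\iota_{0}$ with $v_{\iota_{0}}\neq 0$, which exists because $v\neq 0$; then $b_{k\iota_{0}}=0$ for all $k$, i.e. the $\iota_{0}$-th column of $B$ is identically zero, contradicting the nonsingularity of $B$ granted by Lemma~\ref{Cholesky}. Hence $v^{*}(x\odot y)v>0$, and $x\odot y$ is positive definite.

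The algebraic rearrangement of the double sum is routine; the delicate step I expect to be the main obstacle is the last one, where the conclusion relies on extracting strict positivity (not just semidefiniteness) from the nonsingularity of $B$ via the zero-column argument. Notice that if one only assumed $y$ positive semidefinite, the factor $B$ would be allowed to have a vanishing column and this step would fail, as it should, since the Hadamard product of a positive semidefinite matrix with a positive definite one need not be positive definite.
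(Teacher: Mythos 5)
Your proof is correct. Note, however, that the paper does not prove this lemma at all: it explicitly omits the proofs of Lemmas \ref{Cholesky}--\ref{Hadamard} and refers the reader to Theorem 7.5.3 of Horn and Johnson. Your argument is essentially the standard proof of the Schur product theorem given there: you factor $y=B^{*}B$ via Lemma \ref{Cholesky} and reduce the quadratic form of $x\odot y$ to the sum $\sum_{k}w_{k}^{*}xw_{k}$ with $(w_{k})_{\iota}=v_{\iota}b_{k\iota}$, which is the same mechanism as the Gram-vector representation of $y$ used in the reference (Lemma \ref{Gram} with $R=n$; the Gram vectors are just the columns of $B$). The double-sum rearrangement is right, and the delicate step you flag --- extracting strict positivity from the nonsingularity of $B$ via the zero-column argument --- is handled correctly; your closing remark about the semidefinite case failing (e.g.\ when $y$ has a zero diagonal entry) is also accurate. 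The only cosmetic omission is the one-line observation that $x\odot y$ is Hermitian (symmetric) because $x$ and $y$ are, so that positivity of the quadratic form indeed yields positive definiteness in the paper's sense. A small virtue of your write-up over the paper's bare citation is that it is self-contained within the paper's own toolkit, resting only on Lemma \ref{Cholesky}.
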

\begin{teorema}
$\mathbb{H}_n$ is a domain of positivity with respect to $\sigma(x,y)={\rm Tr} \ \{xy^{*}\}$.
\end{teorema}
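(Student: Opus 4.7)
The plan is to verify the two axioms defining a domain of positivity inside the ambient real Euclidean space $\mathbb{E}$ of $n\times n$ Hermitian matrices, equipped with the bilinear form $\sigma(x,y)={\rm Tr}\{xy^{*}\}$. Before touching the axioms I would check that $\sigma$ is admissible: on $\mathbb{E}$ one has $y^{*}=y$, so $\sigma(x,y)={\rm Tr}(xy)$; this is real since $\overline{{\rm Tr}(xy)}={\rm Tr}((xy)^{*})={\rm Tr}(yx)={\rm Tr}(xy)$, it is symmetric by the cyclic property of the trace, and it is nonsingular because $\sigma(x,x)={\rm Tr}(x^{*}x)=\|x\|_{F}^{2}$ vanishes only when $x=0$.

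For axiom $(A_{1})$, given $a_{1},a_{2}\in\mathbb{H}_{n}$, I would apply Lemma \ref{Cholesky} to write $a_{1}=y^{*}y$ for some nonsingular $y\in\mathbb{C}^{n\times n}$. Then the cyclic property of the trace yields
$$\sigma(a_{1},a_{2})={\rm Tr}(y^{*}y\,a_{2})={\rm Tr}(y\,a_{2}y^{*}).$$
Since $a_{2}$ is Hermitian positive definite and $y$ is nonsingular, the congruence $ya_{2}y^{*}$ is also Hermitian positive definite: for any nonzero $v\in\mathbb{C}^{n}$, $v^{*}(ya_{2}y^{*})v=(y^{*}v)^{*}a_{2}(y^{*}v)>0$ because $y^{*}v\neq 0$. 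Therefore all eigenvalues, and hence the trace, of $ya_{2}y^{*}$ are strictly positive, giving $\sigma(a_{1},a_{2})>0$.

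For axiom $(A_{2})$, I would suppose $a\in\mathbb{E}$ satisfies $\sigma(a,\bar{a})>0$ for every nonzero $\bar{a}\in\overline{\mathbb{H}_{n}}$, the cone of Hermitian positive semidefinite matrices. The key trick is to test this condition against rank-one elements of $\overline{\mathbb{H}_{n}}$. Given any nonzero $v\in\mathbb{C}^{n}$, the matrix $\bar{a}=vv^{*}$ is Hermitian positive semidefinite (by Lemma \ref{Gram}, a single vector $v$ furnishes a Gram matrix of rank one) and nonzero, so it lies in $\overline{\mathbb{H}_{n}}\setminus\{0\}$. Plugging it into the hypothesis gives
$$0<\sigma(a,vv^{*})={\rm Tr}(avv^{*})=v^{*}av.$$
Since this holds for every nonzero $v$, the Hermitian matrix $a$ is positive definite, i.e.\ $a\in\mathbb{H}_{n}$, as required.

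There is no real obstacle here: the only mild point of care is to confirm that $\sigma$ restricts to a real-valued symmetric nonsingular form on the \emph{real} vector space of Hermitian matrices, and to select rank-one positive semidefinite test elements in $(A_{2})$ so that the abstract positivity condition translates into the standard characterization $v^{*}av>0$. The real-case statement for $\mathbb{P}_{n}$ would follow by exactly the same argument with $y^{*}$ replaced by $y^{T}$ and $v^{*}$ by $v^{T}$.
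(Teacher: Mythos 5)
Your proof is correct, but it routes around the paper's argument in a couple of places. For Axiom $(A_1)$ the paper does not factor $a_1$; instead it invokes the Schur product theorem (Lemma \ref{Hadamard}) together with the identity ${\rm Tr}\,\{xy\}=v^{*}(x\odot y)v$, where $v$ is the all-ones vector, so positivity of the trace falls out of positive definiteness of the Hadamard product $x\odot y$. Your congruence argument $\sigma(a_1,a_2)={\rm Tr}(y\,a_2\,y^{*})$ via Lemma \ref{Cholesky} is more elementary: it needs no Hadamard-product machinery and reuses the same factorization lemma that both proofs already need, at the cost of not exhibiting the (rather pretty) link to Lemma \ref{Hadamard} that the paper is showcasing. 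For Axiom $(A_2)$ the two arguments are essentially the same idea --- test $a$ against elements of $\overline{\mathbb{H}_n}$ built from Gram data --- but your specialization to rank-one matrices $vv^{*}$ is the cleaner execution: the paper works with a general Gram matrix $y_{\iota\tilde\iota}=\langle u_\iota,u_{\tilde\iota}\rangle$ and its final display $\sum_{\iota,\tilde\iota}x_{\iota\tilde\iota}u_\iota^{*}u_{\tilde\iota}=u_\iota^{*}xu_{\tilde\iota}$ leaves indices dangling, whereas your computation $\sigma(a,vv^{*})=v^{*}av>0$ for all nonzero $v$ is exactly the standard characterization of positive definiteness. You also verify up front that $\sigma$ is a real, symmetric, nonsingular bilinear form on the real vector space of Hermitian matrices, a hypothesis of the definition that the paper leaves implicit; that is a worthwhile addition rather than a digression.
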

\begin{proof}
First, we need to prove that $\sigma$ attends the Axiom ($A_1$). In fact, by Lemma \ref{Hadamard}, $x\odot y$ is positive definite for any $x,y\in\mathbb{H}_n$. Now, let $v\in \mathbb{C}^{n}$ be the vector whose all components are equal to 1. Since ${\rm Tr} \ \{xy^{*}\}={\rm Tr} \ \{xy\}=v^{*}(x\odot y)v$, for any $x,y\in\mathbb{H}_n$, it follows that $0<v^{*}(x\odot y)v={\rm Tr} \ \{xy^{*}\}$. Second, we claim that $\sigma$ also attends Axiom ($A_2$). Indeed, let $x\in\mathbb{C}^{n\times n}$. Set $y\in\overline{\mathbb{H}_{n}}$ ($y\not={\bf 0}$). By Lemma \ref{Gram}, $y_{\iota\tilde{\iota}}=\langle u_{\iota},u_{\tilde{\iota}}\rangle$ ($\iota,\tilde{\iota}=1,\cdots,n$), for any set of vectors $\{u_{\iota}\in\mathbb{C}^{n}: \iota=1,\cdots, n\}$, not all zero. So, 
$$
\begin{array}{l}
0<{\rm Tr} \ \{xy^{*}\}={\rm Tr} \ \{xy\}=\displaystyle\sum_{\iota,\tilde{\iota}=1}^{n}x_{\iota\tilde{\iota}}y_{\iota\tilde{\iota}}\\
=\displaystyle\sum_{\iota,\tilde{\iota}=1}^{n}x_{\iota\tilde{\iota}} u_{\iota}^{*}u_{\tilde{\iota}}=u_{\iota}^{*}xu_{\tilde{\iota}}.
\end{array}
$$
Since  $\{u_{\iota}\in\mathbb{C}^{n}: \iota=1,\cdots, n\}$ is arbitrarily chosen once so is $y$, we conclude that $x$ is positive definite. By Lemma \ref{Cholesky}, $x\in\mathbb{H}_{n}$. Therefore, the proof is complete.
\end{proof}

Similarly, we have that $\mathbb{P}_{n}$ is also a domain of positivity with respect to $\sigma(x,y)={\rm Tr} \ \{xy^{T}\}$ (the proof is analogous to the precedent case). 
\begin{definicao}
A  linear operator $T:\mathbb{E}\to \mathbb{E}$ is said to be an automorphism on $D$ if $T(\overline{D})=\overline{D}$. 
\end{definicao}
\begin{definicao}
A group $G$ is said to be of Lie type if it is a $C^{\infty}$ manifold such that the map $G\times G\to G$ defined by $(c,\bar{c})\mapsto c\bar{c}^{-1}$ is of class $C^{\infty}$. 
\end{definicao}
Equivalently, $T$ is an automorphism on $D$ if and only if it is injective and $T(D)=D$. The set of all automorphisms on $D$, denoted by $\Sigma(D)$, is a group of Lie type.
\begin{definicao}
A function $N:\overline{D}\to \mathbb{E}$ is said to be a norm if the following axioms are fulfilled
\begin{itemize}
\item[($N_1$)] $N$ is real-analytic, positive on $D$ and continuous on $\overline{D}$; 
\item[($N_2$)] $N(a)=0$ if $a\in\partial D$, where $\partial D$  is the frontier of $D$; 
\item[($N_3$)] $N(T(a))=|{\rm det}({\cal T})|N(a)$, for every automorphism $T$ on $D$ and $a\in D$, where ${\cal T}$ is the matrix associated to $T$ and ${\rm det}({\cal T})$ is the determinant of ${\cal T}$.
\end{itemize}
\end{definicao}

\begin{definicao}
A domain of positivity $D$ is called homogeneous if the automorphisms on $D$ are transitive, i.e.,  for any two points $a_1,a_2\in D$, there exists an automorphism $T\in\Sigma(D)$ such that $T(a_1)=a_2$. 
\end{definicao}

Norms in a homogeneous domain of positivity differ by constants. It enables to think a norm as being unique in certain sense. Another property of norms is its continuity on the frontier of $D$, which is $N(a)$ approaches to zero as $a$ approaches to the finite portion of $\partial D$. Any loss of generality is produced in assuming that $D$ enjoys of a formal real Jordan algebra with unit element $e$ since only homogenous domains of positivity are considered in this work. See \cite{Barut} for building of formal real Jordan algebras in homogenous domains of positivity.

Let $N:\overline{D}\to \mathbb{E}$ be a norm. According to \cite[p. 191]{Rothaus}, by setting $G(a)=-{\rm log} \ N(a)$, $G''(a)$ is a symmetric positive definite bi-linear form on $T_{a}D$,  where $T_{a}D$ denotes the tangent space to $D$, at $a$, and it naturally induces a Riemannian structure in $D$. Namely, for any $a\in D$, the Riemannian metric on the tangent space to $D$, at $a$, represented by $\langle,\rangle_{a}$, is given by $$\langle p_1,p_2\rangle_{a}=\langle G''(a)p_1,p_2\rangle,$$ for any $p_1,p_2\in T_{a}D$.  What is more, all automorphism on $D$ are also isometries, with respect to this metric.  For instance, $\mathbb{P}_{n}$ and $\mathbb{H}_{n}$ are homogeneous domains of positivity and they can be provided with the Riemannian metric defined by the Hessian of the standard logarithmic barrier, given by $$F(x)=-{\rm log \ det}(x),$$          
since $N(x)={\rm det}(x)$ is a norm for both sets. Indeed, for any $n$-by-$n$ Hermitian (symmetric) matrices $y$ and $z$, the Riemannian metric defined by $F''$, at $x\in\mathbb{H}_{n} \ (\mathbb{P}_{n})$, is given by $$\langle y,z\rangle_{x}=\langle F''(x)y,z\rangle={\rm Tr}\{x^{-1}yx^{-1}z\}.$$ Furthermore, the Riemannian metric which was previously introduced makes the sectional curvature of $D$ be nonpositive everywhere. See Corollary 5.10 in \cite[p. 220]{Rothaus}. This implies that $D$ is a Riemannian manifold of Hadamard type. Cartan-Hadamard theorem assures that the exponential map is locally a diffeomorphism from the tangent plane onto the manifold in this case. As consequence, geodesics are uniquely determined and the Riemannian distance $d(a_1,a_2)$ between any two points $a_1,a_2\in D$ is achieved to the length of the geodesic segment $\gamma_{a_1a_2}$ that connects them.

\begin{definicao}[Specially reducible homogeneous domains of positivity]
A homogeneous domain of positivity $D$ is said to be specially reducible if there are a subdomain $D_1$ of $D$, a compact Lie group $D_2\subset \mathbb{E}$ and a $C^{\infty}$ map $\varphi:D\times D_2\to D$ that satisfies the following statements
\begin{itemize}
\item[($R_1$)] $\varphi(a,c_1c_2)=\varphi(\varphi(a,c_2),c_1)$ and $\varphi(a,\bar{e})=a$, for every $a\in D$ and $c_1,c_2\in D_2$, where $\bar{e}$ is the unit element of $D_2$;
\item[($R_2$)] $\varphi_{c}:D\to D$, defined by $\varphi_{c}(a)=\varphi(a,c)$, is an automorphism satisfying $\varphi_c(e)=e$, for any $c\in D_2$;
\item[($R_3$)] for every $a_1,a_2\in D$, there exists $c\in D_2$ for which $\varphi(a_1,c)=a_2$;
\item[($R_4$)] $\varphi:D_1\times D_2\to D$ is onto. 
\end{itemize} 
\end{definicao}

The term \textit{subdomain} in the definition above means that $D_1\subset D$ and itself is a homogenous domain of positivity, with respect to $\sigma$. Even the  Axioms ($R_1$), ($R_2$) and ($R_3$) indicate that  $D_2$ acts transitively on $D$. We also admit that either $D_2$ is itself connected or its isotropy group, which is the maximal compact subgroup of $D_2$, so is. A large discussion about Lie groups and Lie transformation group acting transitively on $C^{\infty}$ manifolds is presented in \cite{Sakai}. Taking into account Theorem \ref{Rham}, $D_1$ can be seen as the Cartesian product of a countable number of irreducible simply connected symmetric spaces (specifically, homogeneous domains of positivity) and $D_2$ is a Euclidean space that enjoys of a Lie structure. 

Any geodesic $\gamma$ in $D$ can be rewritten in terms of geodesics in $D_1$ and $D_2$ as $\gamma\equiv\xi\times\alpha$. Based on the Axiom ($R_4$) we define the following relation on $D_1\times D_2$
\begin{definicao}
Let $(b_1,c_1),(b_2,c_2)\in D_1\times D_2$. $(b_1,c_1)$ is said to be associated to $(b_2,c_2)$ through the relation $R_{\varphi}$ if $\varphi(b_1,c_1)=\varphi(b_2,c_2)$, i.e., $$(b_1,c_1)R_{\varphi}(b_2,c_2)\Leftrightarrow \varphi(b_1,c_1)=\varphi(b_2,c_2).$$
\end{definicao}

A single verification shows that $R_{\varphi}$ is reflexive, symmetric and transitive. Consequently, $R_{\varphi}$ is an equivalence relation on $D_1\times D_2$. Moreover, if we take into account the set $D_1\times D_2/R\varphi$, which is the quotient of $D_1\times D_2$ with respect to $R_{\varphi}$, then the mapping $\psi:D_1\times D_2/R\varphi\to D$ defined by $\psi([(b,c)])=\varphi(b,c)$ is one-to-one $C^{\infty}$ map since $\varphi$ is of class $C^{\infty}$. For instance, by putting $$D_1\equiv\mathbb{D}_{n}, D_2\equiv\mathbb{U}_{n}(\mathbb{O}_{n}), \varphi(\lambda,w)=w\lambda w^{*} (w\lambda w^T),$$ where $\mathbb{U}_{n}$ denotes the connected compact Lie group of unitary matrices, both sets $\mathbb{H}_{n}$ and $\mathbb{P}_{n}$ are specially reducible homogeneous domains of positivity. See \cite{Abrudan} for a brief on the Riemannian structure and optimization algorithms for $\mathbb{U}_{n}$.  

Let $\langle,\rangle_{c}$ be the Riemannian metric on $D_2$, for any $c\in D_2$. According to classical textbooks on Riemannian Geometry as \cite{Sakai} for example, $D_1\times D_2$ is a Riemannian manifold with respect to the product Riemannian metric $\langle,\rangle_{(b,c)}$, given by $$\langle (q_1,r_1),(q_2,r_2)\rangle_{(b,c)}=\langle q_1,q_2\rangle_{b}+\langle r_1,r_2\rangle_{c},$$ where $(q_1,r_1),(q_2,r_2)\in T_{(b,c)}D_1\times D_2$. In this case, it is used to assume the identification $T_{(b,c)}D_1\times D_2\cong T_{b}D_1\times T_{c}D_2$. On the other hand, the Riemannian structure of $D$ induces a Riemannian structure on $D_1\times D_2/R_{\varphi}$ since they are diffeomorphic, by  recalling that $\psi$ is a diffeomorphism of course.  Indeed, taking into account the identification $T_{[(b,c)]}D_1\times D_2/R_{\varphi}\cong T_{\varphi(b,c)}D$, a Riemannian metric is easily introduced on $D_1\times D_2/R_{\varphi}$ by putting $$\langle [(q_1,r_1)],[(q_2,r_2)]\rangle_{[(b,c)]}=\langle p_1,p_2\rangle_{\varphi(b,c)},$$ where $[(q_1,r_1)],[(q_2,r_2)]\in T_{[(b,c)]}D_1\times D_2/R_{\varphi}$ are respectively associated to $p_1,p_2\in T_{\varphi(b,c)}D$.
   
\subsection{Optimization problem and exact proximal point algorithm}
Let $D$ be a specially reducible homogeneous domain of positivity and $f:\overline{D}\to\mathbb{R}$ be convex on $D$. Assume that similar hypotheses to those used in \cite{Gregorio} are fulfilled, which are:
\begin{itemize}
\item[(${\cal H}_1$)] the set of minimizers to $f$ in $\overline{D}$ is nonempty; 
\item[(${\cal H}_2$)] for any $a\in\partial D$, $\displaystyle\lim_{k\to +\infty}f(a_{k})=f(a)$, for every sequence $\{a_{k}\}\subset D$ by satisfying $\displaystyle\lim_{k\to +\infty}a_{k}=a$.
\end{itemize}
The main aim of the current paper is to develop a proximal technique based on the proximal point algorithm in \cite{Ferreira2} to approach 
$$\displaystyle\min_{a\in \overline{D}} f(a).$$

Particularly, as previously discussed, for a given sequence of positive real numbers  $\{\beta_{k}\}$ and any point $\overline{a}=a_{0}\in D$, the proximal point method in \cite{Ferreira2} generates a sequence $\{a_{k}\}\subset D$ defined by 
\begin{equation}
a_{k+1}={\rm arg}\displaystyle\min_{a\in D}\left\{f(a)+\frac{\beta_{k}}{2}d^2(a,a_{k})\right\} \ \ (k=0,1,\cdots). \label{iteracao_principal}
\end{equation}
By Lemma 4.2 in \cite[p. 266]{Ferreira2}, $a_{k+1}$ exists and it is uniquely characterized by  
\begin{equation}
\beta_{k}{\rm exp}^{-1}_{a_{k+1}}a_{k}\in\partial f(a_{k+1}), \label{relacao1}
\end{equation}
for any $k$ ($\ k=0,1,\cdots$). The membership relation is replaced by equality and $\partial f(a_{k+1})$ by ${\rm grad} \ f (a_{k+1})$ whether $f$ is differentiable. Also, by Theorem 6.1 in \cite[p. 269]{Ferreira2}, $\{f(a_{k})\}$ converges to $\displaystyle\inf_{a\in D}f(a)$ whether $\displaystyle\sum_{k=0}^{\infty}\frac{1}{\beta_{k}}=+\infty$. Besides, if $\displaystyle\inf_{a\in D}f(a)$ is achieved in $D$ then $\{a_{k}\}$ converges to any $a^{*}\in D$ for which $f(a^{*})=\displaystyle\inf_{a\in D}f(a)$.

Now, Let $T_{k}$ be an automorphism on $D$ that attends $T_{k}(a_{k})=e$. Since $T_{k}\in\Sigma(D)$, it follows that $T_{k}$ is also an isometry on $D$. So, we have that $$d^{2}(a,a_{k})=d^{2}(T_{k}(a),e).$$
Define $\phi_{k},\rho_{k}:D_1\times D_2\to\mathbb{R}$ by 
\begin{equation}
\phi_{k}(b,c)=f\left(T^{-1}_{k}(\varphi(b,c))\right) 
\end{equation} 
and
\begin{equation}
\rho_{k}(b,c)=d^2(\varphi(b,c),e), 
\end{equation}
respectively. Given $(b_{0},c_{0})\in D_1\times D_2$, the current algorithm generates two sequences $\{b_{j}\}\in D_1$ and $\{c_{j}\}\in D_2$ defined by the following iterations
\begin{eqnarray}
b_{j+1}={\rm arg}\min_{b\in D_1}\left\{\phi_{k}(b,c_{j})+\displaystyle\frac{\beta_{k}}{2}\rho_{k}(b,c_{j})\right\},\label{passo_espectral}\\
c_{j+1}\in{\rm arg}\min_{c\in D_2}\left\{\phi_{k}(b_{j+1},c)+\displaystyle\frac{\beta_{k}}{2}\rho_{k}(b_{j+1},c)\right\},\label{passo_unitario}
\end{eqnarray}
for which $T^{-1}_{k}(\varphi(b_{j},c_{j}))$ converges to $a_{k+1}$ as proven bellow. By now, we call the attention by the proximal scheme  in the Table \ref{tabela1} that merges the algorithm in \cite{Ferreira2} and ideas discussed above.
\begin{table}[htb!]
\centering
\begin{tabular}{|l|}
\hline
\textbf{input} $a_{0}\in D, \beta_{0}>0, \theta\in(0,1]$.\\
\hline\hline
\hspace{0.3cm} $k\leftarrow 0$\\
\hspace{0.3cm} \textbf{while} $0\not\in\partial f(a_{k})$ \textbf{do}\\
\hspace{0.6cm} \textbf{input} $(b_{0},c_{0})\in D_1\times D_2$;\\
\hspace{0.6cm} \textbf{set} $u_{0}=T^{-1}_{k}\left(\varphi(b_{0},c_{0})\right)$;\\
\hspace{0.6cm} $j\leftarrow 0$;\\
\hspace{0.6cm} \textbf{while} $\beta_{k}{\rm exp}^{-1}_{u_{j}}a_{k}\not\in\partial f(u_{j})$ \textbf{do}\\ 
\hspace{0.9cm} \textbf{compute} $b_{j+1}$ \textbf{as in}  (\ref{passo_espectral});\\
\hspace{0.9cm} \textbf{compute} $c_{j+1}$ \textbf{by satisfying} (\ref{passo_unitario});\\ 
\hspace{0.9cm} $u_{j+1}=T^{-1}_{k}\left(\varphi(b_{j+1},c_{j+1})\right)$;\\
\hspace{0.9cm} $j\leftarrow j+1$;\\
\hspace{0.6cm} \textbf {end}\\
\hspace{0.6cm} $a_{k+1}=u_{j}$;\\
\hspace{0.6cm} $\beta_{k+1}=\theta\cdot\beta_{k}$;\\
\hspace{0.6cm} $k\leftarrow k+1$;\\
\hspace{0.3cm} \textbf{end}\\
\textbf{end}\\
\hline
\end{tabular}
\caption{Exact proximal point ($EPP$) algorithm.}
\label{tabela1}
\end{table}
\newpage
\subsubsection{Well-posedness of $b_{j+1}$ and existence of $c_{j+1}$}
\begin{lema}
Let $h:D\to\mathbb{R}$ be (strictly) convex, $T\in\Sigma(D)$ and $c\in D_2$. Then, $g_{c}:D_1\to\mathbb{R}$ defined by $g_{c}(b)=f\left(T(\varphi_{c}(b))\right)$ is (strictly) convex. \label{f_convexa}
\end{lema}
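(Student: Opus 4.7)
The plan is to transport the convexity inequality from $D_1$ into $D$ by showing that $T \circ \varphi_c$ sends minimal geodesics of $D_1$ to minimal geodesics of $D$, and then invoke the (strict) convexity of $f$ directly. (I read the symbol $h$ in the hypothesis as a typo for $f$, since $f$ appears in the formula for $g_c$.)

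First, I would fix $b_1, b_2 \in D_1$ and let $\xi:[t_1,t_2]\to D_1$ be a minimal geodesic segment with $\xi(t_1)=b_1$ and $\xi(t_2)=b_2$. The key observation, which is already stated in the excerpt, is that every geodesic of $D$ decomposes through the product structure as $\gamma\equiv\xi\times\alpha$ in $D_1 \times D_2 / R_\varphi$. Applying this with $\alpha$ the constant geodesic at $c \in D_2$, the curve $t\mapsto \varphi(\xi(t),c)=\varphi_c(\xi(t))$ is a geodesic of $D$. Because $D$ is Hadamard, this geodesic segment is the unique minimal geodesic joining its endpoints.

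Next, since axiom $(R_2)$ says $\varphi_c\in\Sigma(D)$ and $T\in\Sigma(D)$ by assumption, and since all automorphisms of $D$ are isometries with respect to the Riemannian metric induced by $G''$ (as noted in the preliminaries), the curve $t\mapsto T(\varphi_c(\xi(t)))$ is again a minimal geodesic segment in $D$, connecting $T(\varphi_c(b_1))$ to $T(\varphi_c(b_2))$. Then I would apply the convexity of $f$ on $D$ along this geodesic to obtain, for every $t\in[0,1]$,
$$g_c\!\left(\xi((1-t)t_1+t\,t_2)\right)=f\!\left(T(\varphi_c(\xi((1-t)t_1+t\,t_2)))\right)\leq (1-t)\,g_c(b_1)+t\,g_c(b_2),$$
which is exactly the convexity of $g_c$ along $\xi$. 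For the strict case: $T$ and $\varphi_c$ are bijections on $D$, so $b_1\neq b_2$ forces $T(\varphi_c(b_1))\neq T(\varphi_c(b_2))$, and the strict version of the inequality above follows immediately from the strict convexity of $f$ for $t\in(0,1)$.

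The only real obstacle is Step 1, i.e., being sure that $\varphi_c\circ\xi$ is a bona fide geodesic of $D$. This rests on the product decomposition of geodesics stated in the preliminaries together with the identification $T_{[(b,c)]}\,D_1\!\times\! D_2/R_\varphi\cong T_{\varphi(b,c)}D$ and the product Riemannian metric on $D_1\times D_2$; once that is accepted, everything else is a formal chain of isometries plus the hypothesis on $f$, with no further geometric work required.
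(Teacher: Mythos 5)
Your proof is correct and takes essentially the same route as the paper's: push a minimal geodesic segment of $D_1$ forward by $T\circ\varphi_{c}$ to a minimal geodesic segment of $D$, then apply the (strict) convexity of $f$ along it (and you rightly read the $h$ in the hypothesis as the $f$ appearing in $g_c$). If anything you are more careful than the paper on the one delicate point, since the paper justifies the geodesic-preservation step only by saying ``$\varphi_{c}$ is a diffeomorphism and $T$ is an isometry,'' whereas you ground it in the product decomposition $\gamma\equiv\xi\times\alpha$ together with the fact that automorphisms of $D$ are isometries.
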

\begin{proof}
In fact, let $\gamma_{b_1b_2}:[t_1,t_2]\to D_1$ be the geodesic segment connecting $b_1$ to $b_2$ ($\gamma_{b_1b_2}(t_1)=b_1$, $\gamma_{b_1b_2}(t_2)=b_2$ and $\gamma_{b_1b_2}(t)\in D_1$, for every $t\in(t_1,t_2)$) and any $b_1,b_2\in D_{1}$. Then $\xi:[t_1,t_2]\to D$ defined by $$\xi(t)=T(\varphi_{c}(\gamma_{b_1b_2}(t)))$$ is the geodesic segment connecting $T(\varphi_{c}(b_1))$ to $T(\varphi_{c}(b_2))$, since $\varphi_{c}$ is a diffeomorphism and $T$ is an isometry. Thus
$$
\begin{array}{l}
g_{c}(\gamma_{b_1b_2}((1-t)\cdot t_1+t\cdot t_2))\\
=h\left(T(\varphi_{c}(\gamma_{b_1b_2}((1-t)\cdot t_1+t\cdot t_2)))\right)\\
=h(\xi((1-t)\cdot t_1+t\cdot t_2))\leq (1-t)\cdot h(\xi(t_1))+t\cdot h(\xi(t_2))\\
=(1-t)\cdot h\left(T(\varphi_{c}(\gamma_{b_1b_2}(t_1)))\right)+t\cdot h\left(T(\varphi_{c}(\gamma_{b_1b_2}(t_2)))\right)\\
= (1-t)\cdot g_{c}(\gamma_{b_1b_2}(t_1))+t\cdot g_{c}(\gamma_{b_1b_2}(t_2)),
\end{array}
$$
for every $t\in[0,1]$. Furthermore, if $f$ is strictly convex then the inequality is strict for every $t\in(0,1)$. So, the proof is complete.
\end{proof}

\begin{proposicao}
Let $f:\overline{D}\to\mathbb{R}$ be a convex function in $D$. Then, $\phi_{k}(\cdot,c)$ is convex in $D_1$, for any $c\in D_2$. \label{p_1} 
\end{proposicao}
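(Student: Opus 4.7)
The proposition is a direct corollary of Lemma \ref{f_convexa}. The plan is to rewrite $\phi_{k}(\cdot,c)$, for any fixed $c\in D_2$, in the form $h\circ T\circ\varphi_{c}$ for an appropriate convex $h$ and automorphism $T\in\Sigma(D)$, and then invoke that lemma.

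First, I would verify that $T_k^{-1}\in\Sigma(D)$. The point $T_k$ is prescribed as an automorphism on $D$ sending $a_k$ to $e$, hence $T_k\in\Sigma(D)$; since $\Sigma(D)$ is a group of Lie type, $T_k^{-1}$ is also an automorphism on $D$, and in particular an isometry with respect to the Riemannian metric induced by the norm $N$. Second, by the definition of $\varphi_{c}$, for any fixed $c\in D_2$ one has
\[
\phi_{k}(b,c) = f\bigl(T_k^{-1}(\varphi(b,c))\bigr) = f\bigl(T_k^{-1}(\varphi_{c}(b))\bigr),
\]
which matches exactly the function $g_{c}$ appearing in Lemma \ref{f_convexa}, with the roles $h=f$ and $T=T_k^{-1}$.

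Finally, since $f$ is convex on $D$ by hypothesis, a direct application of Lemma \ref{f_convexa} delivers the convexity of $\phi_{k}(\cdot,c)$ on $D_1$, for every $c\in D_2$. There is essentially no obstacle in this proof: all the geometric content — namely, that $T\circ\varphi_{c}$ sends geodesic segments of $D_1$ to geodesic segments of $D$, so that the convexity inequality for $f$ can be pulled back along these curves — has already been settled in the proof of Lemma \ref{f_convexa}. The only small bookkeeping is to ensure that the automorphism inverse $T_k^{-1}$ is the one fed into the lemma, which follows from the group structure of $\Sigma(D)$.
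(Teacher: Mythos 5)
Your proof is correct and follows exactly the paper's own route: the paper likewise observes that $T_k^{-1}\in\Sigma(D)$ and that $\phi_k(\cdot,c)=f\circ T_k^{-1}\circ\varphi_c$ has precisely the form covered by Lemma \ref{f_convexa}, so the convexity of $f$ on $D$ yields the claim immediately. Your extra bookkeeping about the group structure of $\Sigma(D)$ is harmless and only makes explicit what the paper leaves implicit.
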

\begin{proof}
It immediately follows from Lemma \ref{f_convexa} since $T_{k}^{-1}\in\Sigma(D)$ and $f\displaystyle\mid_{D}$ is convex.  
\end{proof}

\begin{proposicao}
$\rho_{k}(\cdot,c)$ is a $C^{\infty}$ strictly convex function in $D_1$, for any $c\in D_2$, and its gradient is given by $${\rm grad}_{b} \ \rho_{k}(b,c)=-2 \  exp^{-1}_{b}e.$$ \label{p_2}
\end{proposicao}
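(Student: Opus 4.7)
The plan is to exploit the fact that the action of $D_{2}$ on $D$ through $\varphi_{c}$ is by isometries fixing the reference point $e$, which collapses $\rho_{k}(\cdot,c)$ to a function with no $c$-dependence at all. Indeed, by axiom ($R_2$), for each $c\in D_{2}$ the map $\varphi_{c}:D\to D$ is an automorphism with $\varphi_{c}(e)=e$, and as recalled in the Preliminary subsection all automorphisms on a homogeneous domain of positivity act as isometries. Hence
$$\rho_{k}(b,c)=d^2(\varphi_{c}(b),e)=d^2(\varphi_{c}(b),\varphi_{c}(e))=d^2(b,e),$$
so $\rho_{k}(\cdot,c)$ is simply the squared Riemannian distance from $b$ to $e$, and the problem reduces to verifying the three claimed properties for the function $b\mapsto d^2(b,e)$ on $D_{1}$.

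From here I would invoke classical Hadamard manifold theory applied to $D_{1}$, which is itself a homogeneous domain of positivity (hence Hadamard, by the discussion at the end of the Preliminary based on Corollary 5.10 in \cite{Rothaus}). The Cartan-Hadamard theorem guarantees that the exponential map is a global diffeomorphism, from which $C^{\infty}$-smoothness of $b\mapsto d^2(b,e)$ follows immediately. Strict convexity is the standard Hadamard manifold result: along any geodesic the Hessian of $\tfrac{1}{2}d^2(\cdot,e)$ is bounded below by the identity (a consequence of Rauch's comparison theorem under nonpositive sectional curvature), so $d^2(\cdot,e)$ is strictly convex. For references, see \cite{Sakai} or \cite{Udriste}.

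The gradient formula comes from the classical identity ${\rm grad}_{b}\ \tfrac{1}{2}d^2(b,e)=-{\rm exp}^{-1}_{b}e$ on Hadamard manifolds, which upon multiplication by $2$ gives the stated expression. The main obstacle I anticipate is ensuring that the gradient computed intrinsically in $D_{1}$ is the one meant in the statement, since $D_{1}$ is a priori only a subdomain of $D$ equipped with its own Rothaus metric; however, both interpretations agree because $D_{1}$ arises as a totally geodesic factor of the decomposition $\psi:D_{1}\times D_{2}/R_{\varphi}\to D$ (reflecting the role of the Rham theorem quoted in the introduction), so that geodesics and inverse exponentials taken in $D_{1}$ and in $D$ coincide whenever both endpoints lie in $D_{1}$.
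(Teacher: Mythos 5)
Your proposal is correct and follows essentially the same route as the paper: both arguments hinge on Axiom ($R_2$) together with the fact that automorphisms are isometries to collapse $\rho_{k}(b,c)=d^2(\varphi_{c}(b),\varphi_{c}(e))$ to $d^2(b,e)$, and then invoke the standard Hadamard-manifold facts that the squared distance to a fixed point is $C^{\infty}$, strictly convex, with gradient $-2\,{\rm exp}^{-1}_{b}e$ (the paper cites Bishop--O'Neill and Sakai for these where you cite Sakai/Rauch, an immaterial difference). Your closing remark about $D_{1}$ sitting totally geodesically in $D$ is a point the paper leaves implicit, so flagging it is a small bonus rather than a divergence.
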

\begin{proof}
In fact, statements (1) and (2) of Theorem 4.1 in \cite[p. 11]{Bishop} affirms that $d^2_{\bar{a}}:D\to\mathbb{R}$ defined by $d^2_{\bar{a}}(a)=d^2(a,\bar{a})$ is a $C^{\infty}$ strictly convex function, for any $\bar{a}\in D$ once $D$ is of Hadamard type. Since $\varphi(b,c)$ is a diffeomorphism, for any $c\in D_2$, $\rho_{k}(\cdot,c)$ is a composition of a $C^{\infty}$ function with a $C^{\infty}$ map and therefore it is $C^{\infty}$ too. On the other hand, Proposition 4.8 in \cite[p. 108]{Sakai} states that ${\rm grad} \ d_{\bar{a}}(a)=\dot{\gamma}_{\bar{a}a}(d_{\bar{a}}(a))$, where $d_{\bar{a}}(a)=d(a,\bar{a})$ and $\dot{\gamma}_{\bar{a}a}(d_{\bar{a}}(a))$ is the velocity of the geodesic segment $\gamma_{\bar{a}a}$ in its end point, i.e., $\dot{\gamma}_{\bar{a}a}(d_{\bar{a}}(a))=exp^{-1}_{\bar{a}}a$. A single reparametrization of $\gamma_{\bar{a}a}$ as $\tilde{\gamma}_{\bar{a}a}(t)=\gamma_{\bar{a}a}(-t)$ enables to write $\dot{\gamma}_{\bar{a}a}(d_{\bar{a}}(a))=-exp^{-1}_{a}\bar{a}$. Thus, the product rule to derivatives in Riemannian manifolds establishes that 
\begin{equation}
{\rm grad} \ d^2_{\bar{a}}(a)=-2exp^{-1}_{a}\bar{a}. \label{aux}
\end{equation}
Now, by Axiom ($R_2$), $\rho_{k}(b,c)=d^2(\varphi(b,c),e)=d^2(\varphi_{c}(b),\varphi_{c}(e))=d^2(b,e)$. It means that $\rho_{k}(\cdot,c)$ does not depend on $c$. So, the strictly convexity of $\rho_{k}(\cdot,c)$  follows from Lemma \ref{f_convexa} and its gradient from (\ref{aux}).   
\end{proof}

We emphasize that the sum of a convex function with a strictly convex one become strictly convex. In addition, minimizers are uniquely determined whether they exist of course. Recall that the same occurs with convex functions in Euclidean spaces. 
\begin{proposicao}
There is only one $b_{j+1}$, for any $j$ ($j=0,1,\cdots$), and it is characterized by $$\beta_{k}exp^{-1}_{b_{j+1}}e\in \partial \phi_{k}(b_{j+1},c_{j}).$$ \label{prox_caracter}
\end{proposicao}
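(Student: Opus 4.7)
The plan is to follow the same pattern as Ferreira's Lemma 4.2 invoked earlier in the paper: establish strict convexity of the regularized objective on the Hadamard subdomain $D_1$, deduce existence and uniqueness of the minimizer $b_{j+1}$, and then read off the characterization from the first-order optimality condition using the explicit gradient from Proposition \ref{p_2}.

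First I would define $\Phi_{k,j}(b) := \phi_k(b,c_j) + \frac{\beta_k}{2}\rho_k(b,c_j)$ and show it is strictly convex on $D_1$. By Proposition \ref{p_1} the function $\phi_k(\cdot,c_j)$ is convex on $D_1$, and by Proposition \ref{p_2} the function $\rho_k(\cdot,c_j)$ is $C^\infty$ and strictly convex on $D_1$; since the sum of a convex function and a strictly convex function is strictly convex, so is $\Phi_{k,j}$. Uniqueness of any minimizer then follows at once, because on the Hadamard manifold $D_1$ any two distinct minimizers $b,\tilde b$ could be joined by a minimal geodesic segment whose midpoint would produce a strictly smaller value of $\Phi_{k,j}$, a contradiction.

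Next I would handle existence. Here I would use coercivity of $\rho_k(\cdot,c_j)=d^2(\cdot,e)$: as $b$ tends to the frontier of $D_1$ or escapes to infinity in $D_1$, $d(b,e)\to +\infty$, so the penalty term dominates and the sublevel sets $\{\Phi_{k,j}\le r\}$ are bounded closed subsets of the Hadamard manifold $D_1$ (using also that $\phi_k(\cdot,c_j)$ is bounded below on bounded sets, for instance by a subgradient inequality at a fixed reference point). Continuity then yields a minimizer. This is structurally the same argument as in Lemma~4.2 of \cite{Ferreira2}, which I would cite to keep the writing short.

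For the characterization, I would apply the first-order necessary and sufficient condition for a (strict) convex minimum: $0\in \partial \Phi_{k,j}(b_{j+1})$. Since $\rho_k(\cdot,c_j)$ is smooth, the subdifferential sum rule on Hadamard manifolds gives
$$
0\in \partial \phi_k(b_{j+1},c_j)+\frac{\beta_k}{2}\,\mathrm{grad}_b\,\rho_k(b_{j+1},c_j).
$$
Substituting the gradient formula $\mathrm{grad}_b\,\rho_k(b,c_j)=-2\,\exp^{-1}_{b}e$ from Proposition \ref{p_2} yields
$$
\beta_k\,\exp^{-1}_{b_{j+1}}e\in \partial \phi_k(b_{j+1},c_j),
$$
which is the claimed characterization. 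The main technical point to be careful with is the smooth-plus-nonsmooth sum rule on the Riemannian side; this is standard for Hadamard manifolds (see \cite{Udriste}) and can be invoked cleanly because one summand is $C^\infty$, so no qualification condition is needed.
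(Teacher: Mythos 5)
Your proposal is correct and follows essentially the same route as the paper: strict convexity of $\left(\phi_{k}+\frac{\beta_{k}}{2}\rho_{k}\right)(\cdot,c_{j})$ from Propositions \ref{p_1} and \ref{p_2}, existence via coercivity of the quadratic penalty (the paper cites $1$-coercivity as in Lemma 3 of \cite{Gregorio}), and the characterization by reading off the first-order condition with the gradient formula ${\rm grad}_{b}\,\rho_{k}(b,c)=-2\,{\rm exp}^{-1}_{b}e$. You merely make explicit the subdifferential sum rule and the uniqueness-via-midpoint argument that the paper leaves implicit in ``therefore the result follows.''
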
 
\begin{proof}
Propositions \ref{p_1} and \ref{p_2} establish that $\left(\phi_{k}+\displaystyle\frac{\beta_{k}}{2}\rho_{k}\right)(\cdot,c_{j})$ is strictly convex in $D_1$.  On the other hand, an argumentation similar to the proof of Lemma 3 in \cite[p. 473]{Gregorio}, with $I$ replaced by $e$, shows that $\left(\phi_{k}+\displaystyle\frac{\beta_{k}}{2}\rho_{k}\right)(\cdot,c_{j})$ is 1-coercive. Therefore, the result follows.   
\end{proof}

We finish this section claiming the famous Weierstrass's Theorem to ensure the existence of $c_{j+1}$. Recall that $D_2$ is compact and  
$\left(\phi_{k}+\displaystyle\frac{\beta_{k}}{2}\rho_{k}\right)(b_{j+1},\cdot)$ is continuous, for every $j$ ($j=0,1,\cdots$).

\subsubsection{Convergence of $\displaystyle\left\{u_{j}\right\}$ to $a_{k+1}$}

\begin{proposicao}
Let $\{b_{j}\}$ and $\{c_{j}\}$ be the sequences generated by the iterations (\ref{passo_espectral}) and (\ref{passo_unitario}) respectively. Also, let $\displaystyle\left\{u_{j}\right\}$ be the inner sequence generated by the ($EPP$) algorithm in its $k$th iteration ($k=0,1,\cdots$), that is $u_{j}=T^{-1}_{k}\left(\varphi(b_{j},c_{j})\right)$ ($j=0,1,\cdots$). Define $\tilde{u}_{j+1}=T^{-1}_{k}\left(\varphi(b_{j+1},c_{j})\right)$ ($j=0,1,\cdots$). Then $$\phi_{k}(b_{j},c_{j})+\displaystyle\frac{\beta_{k}}{2}\rho_{k}(b_{j},c_{j})\geq \phi_{k}(b_{j+1},c_{j+1})+\displaystyle\frac{\beta_{k}}{2}\rho_{k}(b_{j+1},c_{j+1})+\frac{1}{2}d^2(u_{j},\tilde{u}_{j+1}),$$ for every $j\in\mathbb{N}$.\label{proposicaomonotona}
\end{proposicao}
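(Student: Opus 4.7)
The plan is to split the one-step decrease into the two coordinate sub-iterations (\ref{passo_espectral}) and (\ref{passo_unitario}) and chain the resulting estimates. First I would reduce the $D$-distance $d(u_{j},\tilde u_{j+1})$ to an intrinsic distance in $D_{1}$: since $T_{k}^{-1}\in\Sigma(D)$ and, by Axiom $(R_{2})$, $\varphi_{c_{j}}$ are both isometries of $D$,
$$d(u_{j},\tilde u_{j+1})=d\bigl(\varphi_{c_{j}}(b_{j}),\varphi_{c_{j}}(b_{j+1})\bigr)=d(b_{j},b_{j+1}),$$
so the last term on the right-hand side can be produced from a $d^{2}(b_{j},b_{j+1})$ estimate in $D_{1}$.

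For the $b$-step I would exploit the strong convexity of the proximal objective. Proposition \ref{p_1} supplies convexity of $\phi_{k}(\cdot,c_{j})$, and Proposition \ref{p_2} gives $\rho_{k}(\cdot,c_{j})=d^{2}(\cdot,e)$ with gradient $-2\exp^{-1}_{(\cdot)}e$. Because $D$ is Hadamard, $\tfrac{1}{2}d^{2}(\cdot,e)$ is $1$-strongly convex on $D_{1}$, yielding
$$d^{2}(b_{j},e)\geq d^{2}(b_{j+1},e)-2\langle\exp^{-1}_{b_{j+1}}e,\exp^{-1}_{b_{j+1}}b_{j}\rangle_{b_{j+1}}+d^{2}(b_{j},b_{j+1}).$$
Multiplying this by $\beta_{k}/2$ and adding the subgradient inequality for $\phi_{k}(\cdot,c_{j})$ at $b_{j+1}$ (invoking the optimality relation $\beta_{k}\exp^{-1}_{b_{j+1}}e\in\partial\phi_{k}(b_{j+1},c_{j})$ from Proposition \ref{prox_caracter}), the mixed inner-product terms cancel and one is left with
$$\phi_{k}(b_{j},c_{j})+\frac{\beta_{k}}{2}\rho_{k}(b_{j},c_{j})\geq \phi_{k}(b_{j+1},c_{j})+\frac{\beta_{k}}{2}\rho_{k}(b_{j+1},c_{j})+\frac{\beta_{k}}{2}d^{2}(b_{j},b_{j+1}).$$

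For the $c$-step, $c_{j+1}$ is a minimizer of $\phi_{k}(b_{j+1},\cdot)+\tfrac{\beta_{k}}{2}\rho_{k}(b_{j+1},\cdot)$ on the compact set $D_{2}$, so evaluating that objective at $c=c_{j}$ yields
$$\phi_{k}(b_{j+1},c_{j})+\frac{\beta_{k}}{2}\rho_{k}(b_{j+1},c_{j})\geq\phi_{k}(b_{j+1},c_{j+1})+\frac{\beta_{k}}{2}\rho_{k}(b_{j+1},c_{j+1}).$$
Chaining these two inequalities and substituting $d^{2}(b_{j},b_{j+1})=d^{2}(u_{j},\tilde u_{j+1})$ delivers the stated bound. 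The main obstacle is the Hadamard strong-convexity estimate for $d^{2}(\cdot,e)$: it is the only place where the nonpositive sectional curvature of $D$ is essentially used and it fixes the constant in front of $d^{2}(u_{j},\tilde u_{j+1})$; everything else amounts to bookkeeping with the isometries provided by the specially reducible structure.
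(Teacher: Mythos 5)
Your proposal is correct and follows essentially the same route as the paper: the paper obtains your ``strong convexity'' inequality for $d^{2}(\cdot,e)$ via the law of cosines in Hadamard manifolds, combines it with the subgradient characterization of $b_{j+1}$ from Proposition \ref{prox_caracter}, chains the $c$-step minimality, and identifies $d(b_{j},b_{j+1})=d(u_{j},\tilde u_{j+1})$ through the isometry $T_{k}^{-1}\circ\varphi_{c_{j}}$, exactly as you do. (Both your derivation and the paper's actually produce the coefficient $\tfrac{\beta_{k}}{2}$ on the distance term rather than the $\tfrac{1}{2}$ written in the statement, so you have reproduced the paper's argument faithfully, that discrepancy included.)
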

\begin{proof}
Set the geodesic triangle $\triangle(b_{j}, b_{j+1},e)$ in $D_1$. Define by $\theta$ the angle between $exp^{-1}_{b_{j+1}}b_{j}$ and $exp^{-1}_{b_{j+1}}e$.  By the law of cosines in Hadamard manifolds (for instance, see Proposition 4.5 in \cite[p. 223]{Sakai}),
$$d^2(b_{j},e)\geq d^2(b_{j+1},b_{j})+d^2(b_{j+1},e)-2\cdot d(b_{j+1},b_{j})\cdot d(b_{j+1},e)\cdot{\rm cos} \ \theta.$$
Since $\langle exp^{-1}_{b_{j+1}}b_{j},exp^{-1}_{b_{j+1}}e\rangle_{b_{j+1}}=d(b_{j+1},b_{j})\cdot d(b_{j+1},e)\cdot {\rm cos} \ \theta,$
\begin{equation}
d^2(b_{j},e)\geq d^2(b_{j+1},b_{j})+d^2(b_{j+1},e)-2\cdot\langle exp^{-1}_{b_{j+1}}b_{j},exp^{-1}_{b_{j+1}}e\rangle_{b_{j+1}}.\label{desigualdade1}
\end{equation}
On the other hand, the convexity of $\phi_{k}(\cdot,c_{j})$ combined with Proposition \ref{prox_caracter}  implies that 
$$\phi_{k}(b,c_{j})\geq \phi_{k}(b_{j+1},c_{j})+\langle exp^{-1}_{b_{j+1}}b,\beta_{k}\cdot exp^{-1}_{b_{j+1}}e \rangle_{b_{j+1}},$$
for any $b\in D_1$. In particular, 
\begin{equation}
\frac{1}{\beta_{k}}\left[\phi_{k}(b_{j},c_{j})-\phi_{k}(b_{j+1},c_{j})\right]\geq\langle exp^{1}_{b_{j+1}}b_{j},exp^{1}_{b_{j+1}}e \rangle_{b_{j+1}}.\label{desigualdade2}
\end{equation}
 It follows from inequalities (\ref{desigualdade1}) and (\ref{desigualdade2}) that  
$$d^2(b_{j},e)\geq d^2(b_{j+1},b_{j})+d^2(b_{j+1},e)-\frac{2}{\beta_{k}}\left[\phi_{k}(b_{j},c_{j})-\phi_{k}(b_{j+1},c_{j})\right].$$
Multiplying the inequality above by $\frac{\beta_{k}}{2}$ and reorganizing the terms, we have that 
$$\phi_{k}(b_{j+1},c_{j})+\frac{\beta_{k}}{2}d^2(b_{j+1},e)+\frac{\beta_{k}}{2}d^2(b_{j},b_{j+1})\leq \phi_{k}(b_{j},c_{j})+\frac{\beta_{k}}{2}d^2(b_{j},e).$$
Recalling that $\rho_{k}(b,c)=d^2(\varphi(b,c),e)=d^2(\varphi_{c}(b),\varphi_{c}(e))=d^2(b,e)$, it follows that
$$\phi_{k}(b_{j+1},c_{j})+\frac{\beta_{k}}{2}\rho_{k}(b_{j+1},c_{j})+\frac{\beta_{k}}{2}d^2(b_{j},b_{j+1})\leq \phi_{k}(b_{j},c_{j})+\frac{\beta_{k}}{2}\rho_{k}(b_{j},c_{j}).$$
To finishing our argumentation $$\phi_{k}(b_{j+1},c_{j+1})+\frac{\beta_{k}}{2}\rho_{k}(b_{j+1},c_{j+1})\leq \phi_{k}(b_{j+1},c_{j})+\frac{\beta_{k}}{2}\rho_{k}(b_{j+1},c_{j}),$$ by (\ref{passo_unitario}), and $T^{-1}_{k}\circ\varphi_{c_{j}}\in\Sigma(D)$ since $T^{-1}_{k}$ and $\varphi_{c_{j}}$ are automorphism on $D$. It still results from the fact of $T^{-1}_{k}\circ\varphi_{c_{j}}$ be an automorphism that  $$d^2(b_{j},b_{j+1})=d^2\left(T^{-1}_{k}\circ\varphi_{c_{j}}(b_{j}),T^{-1}_{k}\circ\varphi_{c_{j}}(b_{j+1})\right)=d^2(u_{j},\tilde{u}_{j+1}).$$ Therefore, the proposition follows from the last two statement above.
\end{proof}
\begin{corolario}
If $b_{j+1}\not=b_{j}$ then
$$\phi_{k}(b_{j},c_{j})+\frac{\beta_{k}}{2}\rho_{k}(b_{j},c_{j})> \phi_{k}(b_{j+1},c_{j+1})+\frac{\beta_{k}}{2}\rho_{k}(b_{j+1},c_{j+1}).$$\label{corolario1}
\end{corolario}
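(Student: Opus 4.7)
The plan is to derive this corollary as an immediate strengthening of Proposition \ref{proposicaomonotona}. That proposition already gives the inequality
$$\phi_{k}(b_{j},c_{j})+\frac{\beta_{k}}{2}\rho_{k}(b_{j},c_{j})\;\geq\; \phi_{k}(b_{j+1},c_{j+1})+\frac{\beta_{k}}{2}\rho_{k}(b_{j+1},c_{j+1})+\tfrac{1}{2}d^2(u_{j},\tilde{u}_{j+1}),$$
so the only thing left to establish is that the residual term $\tfrac{1}{2}d^2(u_{j},\tilde{u}_{j+1})$ is strictly positive under the hypothesis $b_{j+1}\neq b_{j}$.

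First, I would recall from the proof of Proposition \ref{proposicaomonotona} that $T^{-1}_{k}\circ\varphi_{c_{j}}$ is an automorphism on $D$ (composition of two automorphisms), hence an isometry of $D$. Since
$$u_{j}=T^{-1}_{k}(\varphi(b_{j},c_{j}))=T^{-1}_{k}\circ\varphi_{c_{j}}(b_{j}),\qquad \tilde{u}_{j+1}=T^{-1}_{k}\circ\varphi_{c_{j}}(b_{j+1}),$$
the isometry property yields $d(u_{j},\tilde{u}_{j+1})=d(b_{j},b_{j+1})$.

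Next, the hypothesis $b_{j+1}\neq b_{j}$ together with the fact that $d$ is a bona fide Riemannian distance on the Hadamard manifold $D_{1}$ (so $d(b_{j},b_{j+1})=0$ iff $b_{j}=b_{j+1}$) gives $d(b_{j},b_{j+1})>0$, hence $d^{2}(u_{j},\tilde{u}_{j+1})>0$. Plugging this strict positivity back into the inequality of Proposition \ref{proposicaomonotona} produces the strict inequality claimed in the corollary.

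There is no real obstacle here: the whole content of the corollary is the observation that the excess term in the monotonicity estimate vanishes exactly when $b_{j+1}=b_{j}$, which is ruled out by hypothesis. The only point requiring a touch of care is invoking the isometry identification to convert the distance on $D_{1}$ (the natural space where $b_{j}$ lives) to the distance in $D$ appearing in the statement.
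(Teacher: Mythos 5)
Your proposal is correct and follows essentially the same route as the paper: the paper's proof likewise observes that $b_{j+1}\neq b_{j}$ gives $0<d(b_{j},b_{j+1})=d(u_{j},\tilde{u}_{j+1})$ via the isometry $T^{-1}_{k}\circ\varphi_{c_{j}}$, and then invokes Proposition \ref{proposicaomonotona}. Your write-up merely spells out the isometry identification in more detail than the paper does.
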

\begin{proof}
Since that $b_{j+1}\not=b_{j}$, $0<d(b_{j},b_{j+1})=d(u_{j},\tilde{u}_{j+1})$. So, the statement results from Proposition \ref{proposicaomonotona}. 
\end{proof} 
\begin{corolario}
Suppose that the hypothesis of Corollary \ref{corolario1} fails. If $\phi_{k}(b_{j},c_{j+1})<\phi_{k}(b_{j},c_{j})$ then its statement is still assured.\label{corolario2}
\end{corolario}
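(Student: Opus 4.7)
The plan is very short because this corollary amounts to unpacking definitions once one recalls a key fact proved earlier. Under the failure of the hypothesis of Corollary \ref{corolario1} we have $b_{j+1}=b_{j}$, so the desired inequality becomes
\[
\phi_{k}(b_{j},c_{j})+\frac{\beta_{k}}{2}\rho_{k}(b_{j},c_{j})>\phi_{k}(b_{j},c_{j+1})+\frac{\beta_{k}}{2}\rho_{k}(b_{j},c_{j+1}).
\]
First I would invoke the identity $\rho_{k}(b,c)=d^{2}(\varphi(b,c),e)=d^{2}(\varphi_{c}(b),\varphi_{c}(e))=d^{2}(b,e)$, which was established in the proof of Proposition \ref{p_2} using Axiom ($R_{2}$). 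This shows that $\rho_{k}(b,\cdot)$ is constant in its second argument, so in particular $\rho_{k}(b_{j},c_{j+1})=\rho_{k}(b_{j},c_{j})$.

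The proximal terms therefore cancel, and what remains to prove is simply $\phi_{k}(b_{j},c_{j})>\phi_{k}(b_{j},c_{j+1})$, which is exactly the extra hypothesis of the corollary. Substituting $b_{j+1}=b_{j}$ on the right-hand side then gives the stated strict inequality. No compactness, convexity, or optimality argument is needed beyond what has already been assembled.

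The only step that could be missed is the $c$-independence of $\rho_{k}$; I would state it explicitly rather than cite it implicitly, since it is the reason the quadratic terms drop out and the whole assertion collapses onto the assumed decrease of $\phi_{k}(b_{j},\cdot)$ from $c_{j}$ to $c_{j+1}$. Once that is in place, the corollary is essentially a one-line chain of equalities and one strict inequality.
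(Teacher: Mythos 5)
Your proof is correct and follows essentially the same route as the paper: the paper also reduces to the case $b_{j+1}=b_{j}$ (so the $d^2(u_j,\tilde u_{j+1})$ term in Proposition \ref{proposicaomonotona} vanishes) and obtains strictness from the hypothesis $\phi_{k}(b_{j},c_{j+1})<\phi_{k}(b_{j},c_{j})$, which is exactly your computation once the $c$-independence of $\rho_{k}$ is used. Your version is in fact slightly more explicit than the paper's terse appeal to ``replacing the current hypothesis'' in Proposition \ref{proposicaomonotona}, but the underlying argument is identical.
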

\begin{proof}
Notice that, under the negation of the hypothesis of Corollary \ref{corolario1}, $b_{j+1}=b_{j}$. It implies that $\tilde{u}_{j+1}=u_{j}$ and, consequently, that $d^2(u_{j},\tilde{u}_{j+1})=0$. However, after replacing the current hypothesis at Proposition \ref{proposicaomonotona}, the result still follows.
\end{proof}
\begin{proposicao}[Inner stopping criteria]
Let $\{b_{j}\}$ and $\{c_{j}\}$ be the sequences generated by (\ref{passo_espectral}) and (\ref{passo_unitario}), respectively. If the following statements are fulfilled,  
\begin{itemize}
\item[($i$)] $b_{j+1}=b_{j}$,
\item[($ii$)] $c_{j}\in{\rm arg}\displaystyle\min_{c\in D_2}\left\{\phi_{k}(b_{j+1},c)+\displaystyle\frac{\beta_{k}}{2}\rho_{k}(b_{j+1},c)\right\},$
\end{itemize}
then $u_{j}=a_{k+1}$.\label{complexo}
\end{proposicao}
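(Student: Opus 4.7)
The plan is to verify that $u_{j}$ satisfies the subgradient relation $\beta_{k}{\rm exp}^{-1}_{u_{j}}a_{k}\in\partial f(u_{j})$ which, by (\ref{relacao1}), uniquely characterizes $a_{k+1}$. Equivalently, $u_{j}$ must be shown to be a critical point of the strictly convex regularized objective $g(a):=f(a)+\frac{\beta_{k}}{2}d^{2}(a,a_{k})$ on $D$; since $D$ is a Hadamard manifold and $g$ is strictly convex, a critical point is the unique global minimizer, so such a verification forces $u_{j}=a_{k+1}$.

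First I would simplify hypothesis (ii) via Proposition \ref{p_2}: since $\rho_{k}$ does not depend on its second argument, (ii) together with (i) reduces to $\phi_{k}(b_{j},c)\geq\phi_{k}(b_{j},c_{j})$ for every $c\in D_{2}$. Next I would unpack (i) with Proposition \ref{prox_caracter} (using $b_{j+1}=b_{j}$) to obtain
$$\phi_{k}(b,c_{j})+\frac{\beta_{k}}{2}d^{2}(b,e)\geq \phi_{k}(b_{j},c_{j})+\frac{\beta_{k}}{2}d^{2}(b_{j},e),\qquad \forall b\in D_{1}.$$
The remaining task is to amalgamate these two coordinate-wise inequalities into joint optimality of $(b_{j},c_{j})$ for the lifted objective $F(b,c):=\phi_{k}(b,c)+\frac{\beta_{k}}{2}\rho_{k}(b,c)=g(q(b,c))$ on $D_{1}\times D_{2}$, where $q:=T^{-1}_{k}\circ\varphi$. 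The map $q$ is a smooth surjection by Axiom ($R_{4}$) and the invertibility of $T_{k}$, and it factors through the diffeomorphism $\psi:D_{1}\times D_{2}/R_{\varphi}\to D$ of the Preliminary subsection, so $q$ is a submersion. Every $v\in T_{u_{j}}D$ then lifts to some $(v_{1},v_{2})\in T_{b_{j}}D_{1}\times T_{c_{j}}D_{2}$ with $(dq)_{(b_{j},c_{j})}(v_{1},v_{2})=v$, and the one-sided directional derivative $g'(u_{j};v)$ equals $F'((b_{j},c_{j});(v_{1},v_{2}))$. Decomposing the latter as $F'_{b}(v_{1})+F'_{c}(v_{2})$ and invoking the two inequalities shows that both summands are nonnegative, hence $g'(u_{j};v)\geq 0$ for every $v\in T_{u_{j}}D$ — the first-order optimality criterion for the strictly convex $g$ on $D$.

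The most delicate step will be justifying the decomposition $F'=F'_{b}+F'_{c}$ and the accompanying subdifferential calculus on $D_{1}\times D_{2}$, because $D_{2}$ is compact with nonnegative sectional curvature and standard convex-analytic tools do not apply verbatim there. I would handle this by exploiting that $F$ is the pullback of a convex function through a submersion onto a Hadamard manifold, a structural property that forces any point of $D_{1}\times D_{2}$ which is coordinate-wise first-order optimal for $F$ to project to a first-order optimal (hence globally optimal) point of $g$ in $D$.
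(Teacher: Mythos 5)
Your reduction of the hypotheses is fine: using Proposition \ref{p_2} to strip $\rho_{k}$ out of (ii) and Proposition \ref{prox_caracter} to turn (i) into the inequality $\phi_{k}(b,c_{j})+\frac{\beta_{k}}{2}d^{2}(b,e)\geq\phi_{k}(b_{j},c_{j})+\frac{\beta_{k}}{2}d^{2}(b_{j},e)$ for all $b\in D_{1}$ is exactly the right starting point, and the target (the characterization (\ref{relacao1}) of $a_{k+1}$ as the unique minimizer of $g$) is the right one. But the step on which everything hinges --- ``decomposing $F'((b_{j},c_{j});(v_{1},v_{2}))$ as $F'_{b}(v_{1})+F'_{c}(v_{2})$ and invoking the two inequalities'' --- is precisely what fails here. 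The function $f$, hence $F$, is only convex, not differentiable (the paper is explicitly a \emph{nonsmooth} technique), and for a nonsmooth convex function the one-sided directional derivative is merely sublinear in the direction: $F'((b_{j},c_{j});(v_{1},v_{2}))\leq F'_{b}(v_{1})+F'_{c}(v_{2})$, with the inequality pointing the wrong way for your purposes. Coordinate-wise minimality does not imply joint minimality for nonsmooth convex functions: for $F(x,y)=\max\{|x|,|y|\}$ on $\mathbb{R}^{2}$, the point $(1,1)$ minimizes $F$ in each coordinate separately, both partial directional derivatives are nonnegative in every direction, yet the joint directional derivative along $(-1,-1)$ equals $-1$ and $(1,1)$ is not a minimizer. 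So the conclusion ``both summands are nonnegative, hence $g'(u_{j};v)\geq 0$'' does not follow. Your final paragraph, which is where this gap would have to be closed, only asserts that the pullback-through-a-submersion structure ``forces'' coordinate-wise optimality to project to global optimality; that is the entire content of the proposition restated, not an argument. (There is a second, smaller issue: $\varphi$ is not injective, so lifts of tangent vectors are not unique and the product metric on $D_{1}\times D_{2}$ is not the pullback of the metric on $D$; and $D_{2}$ is compact of nonnegative curvature, so $\phi_{k}(b_{j},\cdot)$ need not be convex on $D_{2}$ at all, which further undermines any subdifferential calculus in the $c$-variable.)

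The paper avoids this trap by arguing by contradiction with an explicitly constructive scheme. Assuming $u_{j}\neq a_{k+1}$, strict convexity of $g$ produces a nearby point $a_{\delta}=T_{k}^{-1}(\varphi(b_{\delta},c_{\delta}))$ with strictly smaller value; the cases $b_{\delta}=b_{j}$ or $c_{\delta}=c_{j}$ immediately contradict (ii) or (i), and in the remaining case the geodesic from $a_{\delta}$ to $u_{j}$ is split via the Rham decomposition into component geodesics $\xi$ in $D_{1}$ and $\alpha$ in $D_{2}$, along which an alternating sequence of pure-$c$ and pure-$b$ moves is built (controlled by the semiparallelogram law and the strict convexity of $\phi_{k}+\frac{\beta_{k}}{2}\rho_{k}$ along geodesics of $D$) until a single-coordinate strict decrease is isolated, contradicting (i) or (ii). In other words, the paper replaces the invalid additive splitting of the directional derivative by a limiting argument that manufactures a genuinely one-coordinate descent direction. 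To salvage your approach you would need to supply exactly such an argument; the first-order calculus alone cannot do it.
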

\begin{proof}
In fact, suppose that $u_{j}\not=a_{k+1}$. Then, for any $\delta>0$, there exists $a_{\delta}\in B_{\delta}(u_{j})=\{a\in D: d(a,u_{j})<\delta\}$ for which $$f(a_{\delta})+\frac{\beta_{k}}{2}d^2(a_{\delta}, a_{k})<f(u_{j})+\frac{\beta_{k}}{2}d^2(u_{j}, a_{k}),$$ since $\left(f+\frac{\beta_{k}}{2}d^2_{a_{k}}\right)$ is strictly convex. By putting $a_{\delta}=T^{-1}_{k}(\varphi(b_{\delta},c_{\delta}))$\footnote{Notice that it is always possible to rewrite $a_{\delta}$ of that form since $T^{-1}_{k}$ is an automorphism. The existence of $b_{\delta}$ and $c_{\delta}$ is assured by Axiom ($R_4$).}, for any $b_{\delta}\in D_1$ and $c_{\delta}\in D_2$, it is the same to affirm that 
$$\phi_{k}(b_{\delta},c_{\delta})+\frac{\beta_{k}}{2}\rho_{k}(b_{\delta},c_{\delta})<\phi_{k}(b_{j},c_{j})+\frac{\beta_{k}}{2}\rho_{k}(b_{j},c_{j}).$$
Here, we detach two trivial cases: or ${\bf b_{\delta}=b_{j}}$ and ${\bf c_{\delta}\not=c_{j}}$ or ${\bf b_{\delta}\not=b_{j}}$ and ${\bf c_{\delta}=c_{j}}$. The first case contradicts the hypothesis ($ii$) for $\delta$ small enough. On the other hand, the hypothesis ($i$) is corrupted in the second case. Thus, to continuing from this point the hard case is assumed, i. e., ${\bf b_{\delta}\not=b_{j}}$ and ${\bf c_{\delta}\not=c_{j}}$. Let $\gamma$ be the geodesic segment in $D$ by connecting $a_{\delta}$ to $u_{j}$, i.e., $\gamma(0)=a_{\delta}$, $\gamma(1)=u_{j}$ and $\gamma(t)\in D$, for every $t\in(0,1)$. Then, by Rham Decomposition theorem for Simply connected symmetric spaces, there exist geodesic segments $\xi$ in $D_1$ and $\alpha$ in $D_2$ by connecting $b_{\delta}$ to $b_{j}$ and $c_{\delta}$ to $c_{j}$, respectively, for which $\gamma(t)=T^{-1}_{k}(\varphi(\xi(t),\alpha(t)))$, for any $t\in[0,1]$. Furthermore, the strictly convexity of $\left(\phi_{k}+\frac{\beta_{k}}{2}\rho_{k}\right)$ guarantees that $$\phi_{k}(\xi(t),\alpha(t))+\frac{\beta_{k}}{2}\rho_{k}(\xi(t),\alpha(t))<\phi_{k}(b_{j},c_{j})+\frac{\beta_{k}}{2}\rho_{k}(b_{j},c_{j}),$$ for any $t\in[0,1)$. Let $\{t_l\}\subset \mathbb{R}_{+}$, where $\mathbb{R}_{+}=\{t\in\mathbb{R}:t\geq 0\}$, $\{b_{l}\}\subset D_1$ and $\{c_{l}\}\subset D_2$ be sequences built as follows: ($a$)  set $t_{0}=0$, $b_{0}=b_{\delta}$, $c_{0}=c_{\delta}$ and $l=0$; ($b$) compute the largest value $t_{l+1}\in [t_{l},1]$ for which $$\phi_{k}(b_{l},\alpha(t_{l+1}))+\frac{\beta_{k}}{2}\rho_{k}(b_{l},\alpha(t_{l+1}))\leq\phi_{k}(b_{j},c_{j})+\frac{\beta_{k}}{2}\rho_{k}(b_{j},c_{j});$$ ($c$) define $c_{l+1}=\alpha(t_{l+1})$, $b_{l+1}=\xi_{l}\left(\frac{1}{2}\right)$, where $\xi_{l}$ is the geodesic segment by connecting $b_{l}$ to $\xi(t_{l+1})$ ; ($d$) update $l=l+1$ and return to ($b$). By construction, $\{t_l\}$ is monotone nondecreasing and bounded above by 1. Consequently, it converges to any $\tilde{t}\in(0,1]$. Also by construction, $c_{l}$ converges to $\alpha(t_{\tilde{l}})$ and $\xi(t_{l+1})$ to $\xi(t_{\tilde{l}})$. According to \cite[p. 113]{Lim},  domains of positivity are particular cases of Bruhat-Tits space where the Semiparallelogram Law is attended at the midpoint of the geodesic segment which connects any pair of their points. So,  $$d^2(b_{l+1},a)\leq\frac{d^2(b_{l},a)+d^2(\xi(t_{l+1}),a)}{2}-\frac{d^2(b_{l},\xi(t_{l+1}))}{4}\leq \frac{d^2(b_{l},a)+d^2(\xi(t_{l+1}),a)}{2},$$ for every $a\in D$. In particular, for $a=\xi(t_{\tilde{l}})$, $$d^2(b_{l+1},\xi(t_{\tilde{l}})\leq\frac{d^2(b_{l},\xi(t_{\tilde{l}}))+d^2(\xi(t_{l+1}),\xi(t_{\tilde{l}}))}{2}.$$ However $d(\xi(t_{l+1}),\xi(t_{\tilde{l}}))$ is closed to zero for $l$ great enough. This implies that exists $l_0\in\mathbb{N}$, sufficiently great, for which $d^2(b_{l+1},\xi(t_{\tilde{l}}))\leq d^2(b_{l},\xi(t_{\tilde{l}}))$, for every $l\geq l_0$. As consequence, it follows that $d(b_{l+1},\xi(t_{\tilde{l}}))\leq d(b_{l},\xi(t_{\tilde{l}}))$, for every $l\geq l_0$. So, we conclude that $\{b_{l}\}$ belongs in a normal ball centered in $\xi(t_{\tilde{l}})$, with radius $r=\max_{0\leq l\leq l_0}d(b_{l},\xi(t_{\tilde{l}}))$. It means that $\{b_{l}\}$ is bounded. On the other hand, we claim that $$\phi_{k}(b_{l},c_{l})+\frac{\beta_{k}}{2}\rho_{k}(b_{l},c_{l})<\phi_{k}(b_{j},c_{j})+\frac{\beta_{k}}{2}\rho_{k}(b_{j},c_{j}),$$
for every $l\in \mathbb{N}\cup\{0\}$. By induction over $l$, the initial hypothesis, that is $$\phi_{k}(b_{0},c_{0})+\frac{\beta_{k}}{2}\rho_{k}(b_{0},c_{0})<\phi_{k}(b_{j},c_{j})+\frac{\beta_{k}}{2}\rho_{k}(b_{j},c_{j}),$$ is assured by definition of $b_{0}$ and $c_{0}$. Now, Suppose that the induction hypothesis is fulfilled. By definition of $c_{l+1}$, $$\phi_{k}(b_{l},c_{l+1})+\frac{\beta_{k}}{2}\rho_{k}(b_{l},c_{l+1})\leq\phi_{k}(b_{j},c_{j})+\frac{\beta_{k}}{2}\rho_{k}(b_{j},c_{j}).$$
Taking into account that $T^{-1}_{k}\left(\varphi(\xi_{l}(t),c_{l+1})\right)$ is the geodesic segment in $D$ which connects\\ $T^{-1}_{k}\left(\varphi(b_{l},c_{l+1})\right)$ to  $T^{-1}_{k}\left(\varphi(\xi(t_{l+1}),c_{l+1})\right)$ and the strict convexity of $\left(\phi_{k}+\frac{\beta_{k}}{2}\rho_{k}\right)$, 
$$
\begin{array}{l}
\phi_{k}(\xi_{l}(t),c_{l+1})+\frac{\beta_{k}}{2}\rho_{k}(\xi_{l}(t),c_{l+1})\\
<(1-t)\cdot\left[\phi_{k}(b_{l},c_{l+1})+\frac{\beta_{k}}{2}\rho_{k}(b_{l},c_{l+1})\right]+t\cdot\left[\phi_{k}(\xi(t_{l+1}),c_{l+1})+\frac{\beta_{k}}{2}\rho_{k}(\xi(t_{l+1}),c_{l+1})\right]\\
=(1-t)\cdot\left[\phi_{k}(b_{l},c_{l+1})+\frac{\beta_{k}}{2}\rho_{k}(b_{l},c_{l+1})\right]\\
+t\cdot\left[\phi_{k}(\xi(t_{l+1}),\alpha(t_{l+1}))+\frac{\beta_{k}}{2}\rho_{k}(\xi(t_{l+1}),\alpha(t_{l+1}))\right]\\
<(1-t)\cdot\left[\phi_{k}(b_{j},c_{j})+\frac{\beta_{k}}{2}\rho_{k}(b_{j},c_{j})\right]+t\cdot\left[\phi_{k}(b_{j},c_{j})+\frac{\beta_{k}}{2}\rho_{k}(b_{j},c_{j})\right]\\
=\phi_{k}(b_{j},c_{j})+\frac{\beta_{k}}{2}\rho_{k}(b_{j},c_{j}),
\end{array}
$$
for every $t\in (0,1)$. In particular, for $t=\frac{1}{2}$, the plea follows. Now, the proof is divided in two cases. First, suppose that $\tilde{t}=1$. In addition, admit that this is achieved in a finite number of steps, i. e., there exists $l_0\in\mathbb{N}$ such that $t_l=1$, for every $l\geq l_0$. Without loss of generality, assume that $l_0$ is the first index for which $t_{l_0}=1$. Again by construction, $$\phi_{k}(b_{(l_0-1)},c_{j})+\frac{\beta_{k}}{2}\rho_{k}(b_{(l_0-1)},c_{j})\leq\phi_{k}(b_{j},c_{j})+\frac{\beta_{k}}{2}\rho_{k}(b_{j},c_{j}),$$ with $b_{(l_0-1)}\not=b(t_{(l_0)})=b(1)=b_{j}$. This implies that $b_{(l_0)}=\xi_{(l_0)}(\frac{1}{2})\not=b_{j}$, where $\xi_{(l_0)}$ is the geodesic segment that connects $b_{(l_0-1)}$ to $b(t_{(l_0)})=b_{j}$. From this point, $c(t_{l})=c_{j}$ and $b_{l}=b_{(l_0)}$, for every $l>l_0$. This also corrupts the hypothesis ($i$) since 
$$
\begin{array}{l}
\phi_{k}(b_{(l_0)},c_{(l_0)})+\frac{\beta_{k}}{2}\rho_{k}(b_{(l_0)},c_{(l_0)})\\
=\phi_{k}(b_{(l_0)},c_{j})+\frac{\beta_{k}}{2}\rho_{k}(b_{(l_0)},c_{j})<\phi_{k}(b_{j},c_{j})+\frac{\beta_{k}}{2}\rho_{k}(b_{j},c_{j}).
\end{array}
$$
Assume that the convergence of $t_{l}$ to 1 is not finite.  Let $\{b_{(l_q)}\}$ be a convergent subsequence of $\{b_{l}\}$ and $\tilde{b}$ its limit point. The continuity of $(\phi_{k}+\frac{\beta_{k}}{2}\rho_{k})$ implies that  
$$
\begin{array}{l}
\phi_{k}(\tilde{b},c_{j})+\frac{\beta_{k}}{2}\rho_{k}(\tilde{b},c_{j})\\
=\lim_{q\to +\infty}\left[\phi_{k}(b_{(l_q)},c_{(l_q)})+\frac{\beta_{k}}{2}\rho_{k}(b_{(l_q)},c_{(l_q)})\right]
\leq\phi_{k}(b_{j},c_{j})+\frac{\beta_{k}}{2}\rho_{k}(b_{j},c_{j}).
\end{array}
$$
Again, by the continuity of $(\phi_{k}+\frac{\beta_{k}}{2}\rho_{k})$, there exists $q_0\in\mathbb{N}$ such that $$\phi_{k}(b_{(l_q)},c_{j})+\frac{\beta_{k}}{2}\rho_{k}(b_{(l_q)},c_{j})\leq\phi_{k}(b_{j},c_{j})+\frac{\beta_{k}}{2}\rho_{k}(b_{j},c_{j}),$$ for every $q\geq q_0$, with $b_{(l_{q_0})}\not=b_{j}$ since the convergence of $t_{l}$ to 1 is not achieved in a finite number of steps. Let $\tilde{\xi}$ be the geodesic segment that connects $\tilde{b}$ to $b_{j}$ whether $\tilde{b}\not=b_{j}$. Notice that if $\tilde{b}=b_{j}$ then we can replace $\tilde{b}$ by $b_{l_{q_0}}$. The strict convexity of $(\phi_{k}+\frac{\beta_{k}}{2}\rho_{k})$ implies that   
$$
\begin{array}{l}
\phi_{k}(\tilde{\xi}(\frac{1}{2}),c_{j})+\frac{\beta_{k}}{2}\rho_{k}(\tilde{\xi}(\frac{1}{2}),c_{j})\\
<\frac{1}{2}\left[\phi_{k}(\tilde{b},c_{j})+\frac{\beta_{k}}{2}\rho_{k}(\tilde{b},c_{j})\right]+\frac{1}{2}\left[\phi_{k}(b_{j},c_{j})+\frac{\beta_{k}}{2}\rho_{k}(b_{j},c_{j})\right]\\
\leq \frac{1}{2}\left[\phi_{k}(b_{j},c_{j})+\frac{\beta_{k}}{2}\rho_{k}(b_{j},c_{j})\right]+\frac{1}{2}\left[\phi_{k}(b_{j},c_{j})+\frac{\beta_{k}}{2}\rho_{k}(b_{j},c_{j})\right]\\
=\phi_{k}(b_{j},c_{j})+\frac{\beta_{k}}{2}\rho_{k}(b_{j},c_{j})
\end{array}
$$
and it still contradicts ($i$). We emphasize that $b_{l_{q_0}}\not=b_{j}$ because if this is not true then, by construction, $t_{l_q}$ would be constant, or strictly smaller than 1 or equal to 1, for every $q>q_0$. It would contradict the assumption or that $t_l$ converges to 1 or that the convergence of $t_l$ to 1 is not finite. By finishing our argumentation,  admit that $\tilde{t}\in(0,1)$. We call the attention to the fact that it only happens if $b_{l}$ is sufficiently closed to $b_{j}$, for $l$ great enough. Therefore,  
$$
\begin{array}{l}
\phi_{k}(b_{j},c_{\tilde{l}})+\frac{\beta_{k}}{2}\rho_{k}(b_{j},c_{\tilde{l}})\\
=\displaystyle\lim_{l\to+\infty}\left[\phi_{k}(b_{l},c_{l})+\frac{\beta_{k}}{2}\rho_{k}(b_{l},c_{l})\right]
\leq\phi_{k}(b_{j},c_{j})+\frac{\beta_{k}}{2}\rho_{k}(b_{j},c_{j}),
\end{array}
$$ 
where $c_{\tilde{l}}=\alpha(t_{\tilde{l}})\not=c_{j}$ since $t_{\tilde{l}}\not=1$. Analogously to the precedent case, $T^{-1}_{k}\left(\varphi(b_{j},\alpha(t)\right)$, for $t\in[t_{\tilde{l}},1]$, is the geodesic segment in $D$ which connects $T^{-1}_{k}\left(b_{j},c_{\tilde{l}}\right)$ to  $T^{-1}_{k}\left(b_{j},c_{j}\right)$\footnote{The restriction of a geodesic $\alpha$ in any complete Riemannian manifold to a closed subinterval $[a,b]\in\mathbb{R}$ become a geodesic segment by connecting $\alpha(a)$ to $\alpha(b)$.}. Again, by the strict convexity of $(\phi_{k}+\frac{\beta_{k}}{2}\rho_{k})$, we have
$$
\begin{array}{l}
\phi_{k}(b_{j},\alpha(t))+\frac{\beta_{k}}{2}\rho_{k}(b_{j},\alpha(t))\\
<(1-t)\cdot\left[\phi_{k}(b_{j},c_{\tilde{l}})+\frac{\beta_{k}}{2}\rho_{k}(b_{j},c_{\tilde{l}})\right]
+t\cdot\left[\phi_{k}(b_{j},c_{j})+\frac{\beta_{k}}{2}\rho_{k}(b_{j},c_{j})\right]\\
\leq (1-t)\cdot\left[\phi_{k}(b_{j},c_{j})+\frac{\beta_{k}}{2}\rho_{k}(b_{j},c_{j})\right]
+t\cdot\left[\phi_{k}(b_{j},c_{j})+\frac{\beta_{k}}{2}\rho_{k}(b_{j},c_{j})\right]\\
=\phi_{k}(b_{j},c_{j})+\frac{\beta_{k}}{2}\rho_{k}(b_{j},c_{j}),
\end{array}
$$ 
for every $t\in(t_{\tilde{l}},1)$. Therefore, for $t$ sufficiently close to $1$, the last inequality contradicts ($ii$).
\end{proof}
\begin{teorema}
The sequence $\{u_{j}\}_{j\in\mathbb{N}\cup{0}}$ defined by $u_{j}=T^{-1}_{k}\left(\varphi(b_{j},c_{j})\right), \ j=0,1,\cdots$, converges to $a_{k+1}$.
\end{teorema}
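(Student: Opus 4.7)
The plan is to exploit the monotone descent from Proposition \ref{proposicaomonotona} together with a compactness argument, and then identify every accumulation point as $a_{k+1}$ via Proposition \ref{complexo}. Write $F_{k}(b,c):=\phi_{k}(b,c)+\frac{\beta_{k}}{2}\rho_{k}(b,c)$. By Proposition \ref{proposicaomonotona} the sequence $\{F_{k}(b_{j},c_{j})\}$ is non-increasing; since $\phi_{k}\geq \inf_{\overline{D}} f$ (finite by $({\cal H}_1)$) and $\rho_{k}\geq 0$, it is bounded below and hence converges to some $F_{\infty}$. The residual term in that same proposition then forces $d(u_{j},\tilde{u}_{j+1})\to 0$, and since $T_{k}^{-1}\circ\varphi_{c_{j}}$ is an isometry, equivalently $d(b_{j},b_{j+1})\to 0$.

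For compactness, the bound $\rho_{k}(b_{j},c_{j})=d^{2}(b_{j},e)\leq \frac{2}{\beta_{k}}\left[F_{k}(b_{0},c_{0})-\inf_{\overline{D}} f\right]$ confines $\{b_{j}\}$ to a closed metric ball in the Hadamard manifold $D_{1}$, while $D_{2}$ is compact; so $\{(b_{j},c_{j})\}$ admits an accumulation point $(b^{*},c^{*})$ attained along some subsequence $(b_{j_{l}},c_{j_{l}})\to(b^{*},c^{*})$ for which also $b_{j_{l}+1}\to b^{*}$. Passing to the limit in the subdifferential characterization $\beta_{k}\exp^{-1}_{b_{j_{l}+1}}e\in\partial\phi_{k}(b_{j_{l}+1},c_{j_{l}})$ from Proposition \ref{prox_caracter}, and invoking the closedness of the convex subdifferential together with continuity of $\exp^{-1}$, one obtains $\beta_{k}\exp^{-1}_{b^{*}}e\in\partial\phi_{k}(b^{*},c^{*})$; that is, $b^{*}$ is the unique minimizer of $F_{k}(\cdot,c^{*})$ over $D_{1}$, which plays the role of hypothesis $(i)$ of Proposition \ref{complexo}.

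To obtain the second hypothesis, I chain $F_{k}(b_{j},c_{j})\geq F_{k}(b_{j+1},c_{j})\geq F_{k}(b_{j+1},c_{j+1})$ and use $F_{k}(b_{j},c_{j})\to F_{\infty}$ to force $F_{k}(b_{j+1},c_{j})\to F_{\infty}$. Extracting a further subsequence with $c_{j_{l}+1}\to c^{**}\in D_{2}$, the inequality $F_{k}(b_{j_{l}+1},c_{j_{l}+1})\leq F_{k}(b_{j_{l}+1},c)$ for every $c\in D_{2}$ passes to the limit and gives $c^{**}\in{\rm arg}\min_{c\in D_{2}} F_{k}(b^{*},c)$; since $F_{k}(b^{*},c^{**})=F_{\infty}=F_{k}(b^{*},c^{*})$, the point $c^{*}$ also lies in that argmin. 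Both hypotheses of Proposition \ref{complexo} now hold at $(b^{*},c^{*})$, so $T_{k}^{-1}(\varphi(b^{*},c^{*}))=a_{k+1}$; because every accumulation point of the bounded sequence $\{u_{j}\}$ equals the uniquely determined $a_{k+1}$, the whole sequence converges to $a_{k+1}$. The main obstacle I foresee is exactly this last identification: since ${\rm arg}\min_{c\in D_{2}} F_{k}(b^{*},c)$ on the compact Lie group $D_{2}$ need not be a singleton, one must use the coincident limit $F_{\infty}$ along both the $c_{j_{l}}$ and $c_{j_{l}+1}$ subsequences to argue that the specific accumulation point $c^{*}$, not merely $c^{**}$, is optimal.
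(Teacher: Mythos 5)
Your proposal is correct, and it is worth noting that it is substantially more complete than the argument the paper actually gives. The paper's proof is a two-case sketch: if the stopping criterion $b_{j+1}=b_{j}$ and $c_{j}\in{\rm arg}\min_{c}F_{k}(b_{j+1},c)$ is met at some finite $j$, Proposition \ref{complexo} applies directly and the sequence is constant thereafter; otherwise it invokes Corollaries \ref{corolario1} and \ref{corolario2} to say that $\left\{\left(f+\frac{\beta_{k}}{2}d^{2}_{a_{k}}\right)(u_{j})\right\}$ is monotone decreasing and then simply asserts that it ``therefore'' converges to the value at $a_{k+1}$. That assertion is exactly the nontrivial step: a decreasing bounded sequence converges to its own infimum, not automatically to the minimum of the regularized objective. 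Your accumulation-point argument --- boundedness of $\{b_{j}\}$ from the descent estimate, compactness of $D_{2}$, $d(b_{j},b_{j+1})\to 0$ from the summable residual in Proposition \ref{proposicaomonotona}, closedness of the subdifferential to get coordinate-wise optimality of the limit $(b^{*},c^{*})$, and the coincident limit $F_{\infty}$ along the shifted subsequence to transfer optimality from $c^{**}$ to $c^{*}$ --- is precisely the missing justification, and your handling of the non-uniqueness of ${\rm arg}\min_{c\in D_{2}}F_{k}(b^{*},c)$ is the right way around the one genuine subtlety. The only caveat is that you apply Proposition \ref{complexo} at the accumulation point rather than at an iterate of the algorithm; this is legitimate because its proof uses nothing beyond the two coordinate-wise optimality conditions, but you should state explicitly that you are reading it as a statement about any pair $(b,c)$ with $b$ the minimizer of $F_{k}(\cdot,c)$ and $c$ a minimizer of $F_{k}(b,\cdot)$, since that is not literally how its hypotheses are phrased.
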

\begin{proof}
In fact, if $b_{j+1}=b_{j}$ and $c_{j}\in{\rm arg}\min_{c\in D_2}\left(\phi_{k}(b_{j+1},c)+\displaystyle\frac{\beta_{k}}{2}\rho_{k}(b_{j+1},c)\right)$ then $u_{j+1}$ is updated as the own iterated $u_{j}$. So, $u_{l}=u_{j}$, for every $l\geq j$, $l\in\mathbb{N}$, i.e., starting this point the sequence is constant. However, by Proposition \ref{complexo}, $u_{j}=a_{k+1}$. On the other hand, if  $b_{j+1}\not=b_{j}$ or $c_{j}\not\in{\rm arg}\min_{c\in D_2}\{\phi_{k}(b_{j+1},c)+\displaystyle\frac{\beta_{k}}{2}\rho_{k}(b_{j+1},c)\}$, for every $j=0,1,\cdots$, then, by Corollaries \ref{corolario1} and \ref{corolario2}, the sequence $\left\{\left(f+\frac{\beta_{k}}{2}d^2_{a_{k}}\right)(u_{j})\right\}$ is monotone decreasing. Therefore, it converges to $\left(f+\frac{\beta_{k}}{2}d^2_{a_{k}}\right)(a_{k+1})$. Since $a_{k+1}$ is uniquely determined, the result follows.
\end{proof}

\subsubsection{On the global convergence to the $EPP$ algorithm}

As cited at the beginning of the current section, $f(a_{k})\to \displaystyle\inf_{a\in D} f(a)$. Moreover, if the set  of minimizers to $f$ intersects $D$ then the sequence $\{a_{k}\}$ generated by the ($EPP$) Algorithm converges to any point from this intersection. On the other hand, if the intersection between the set of minimizers to $f$ and $D$ is empty then, by (${\cal H}_1$), all minimizers to $f$ belongs to $\partial D$. Thus, by (${\cal H}_2$) and a similar argumentation to that used in the proof of Lemma 6 in \cite[p. 475]{Gregorio}, the convergence of $f(a_{k})$ to $\displaystyle\min_{a\in\overline{D}}f(a)$ is fulfilled. So, it enables to enunciate a result analogous to Theorem 1 in \cite[p. 475]{Gregorio}.
\begin{teorema}[weak and strong convergence: exact version]
Let $f:\overline{D}\to \mathbb{R}$ and $\{a_{k}\}$ be respectively a convex function in $D$ and the sequence generated by the ($EPP$) algorithm. Then $a_{k}$ weakly converges to $a^{*}$, with respect to $f$, for any $a^{*}\in {\rm arg}\displaystyle\min_{a\in \overline{D}} f(a)$. Moreover, if ${\rm arg}\displaystyle\min_{a\in \overline{D}} f(a)\cap D\not=\emptyset$ then $a_{k}$ strongly converges to any $a^{*}\in{\rm arg}\displaystyle\min_{a\in \overline{D}} f(a)\cap D$. 
\end{teorema}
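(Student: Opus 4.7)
The plan is to show that the outer sequence generated by the $(EPP)$ algorithm is identical to the sequence produced by the Ferreira--Oliveira proximal point method on $D$, and then to transplant the global convergence results of \cite{Ferreira2} with the help of hypotheses $({\cal H}_1)$ and $({\cal H}_2)$. First I would note that the inner-iteration theorem proved just above guarantees $u_{j}\to a_{k+1}$, where $a_{k+1}$ is the proximal iterate defined by (\ref{iteracao_principal}). Consequently, the outer recursion of the $(EPP)$ algorithm coincides with the scheme studied in \cite{Ferreira2}, and the whole problem reduces to applying that paper's convergence theorem in our setting.

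Next I would verify the step-size condition $\sum_{k}1/\beta_{k}=+\infty$ required by Theorem 6.1 of \cite[p.~269]{Ferreira2}. Since $\beta_{k+1}=\theta\beta_{k}$ with $\theta\in(0,1]$, one has $\beta_{k}=\theta^{k}\beta_{0}$: when $\theta=1$ the series is a constant positive series, while when $\theta<1$ the terms $1/\beta_{k}$ grow geometrically. In either case the series diverges. The cited theorem then gives $f(a_{k})\to\displaystyle\inf_{a\in D}f(a)$, and, if $\arg\displaystyle\min_{a\in\overline{D}}f(a)\cap D\neq\emptyset$, it further gives $a_{k}\to a^{*}$ for every $a^{*}$ in this intersection, which settles the strong-convergence claim immediately.

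The remaining and more delicate case is when every minimizer of $f$ on $\overline{D}$ lies in $\partial D$. Here I would adapt the reasoning used in the proof of Lemma 6 of \cite[p.~475]{Gregorio}: fix $a^{*}\in\arg\displaystyle\min_{a\in\overline{D}}f(a)$, which is nonempty by $({\cal H}_1)$, and use $({\cal H}_2)$ to pick $\{\tilde a_{m}\}\subset D$ with $\tilde a_{m}\to a^{*}$ and $f(\tilde a_{m})\to f(a^{*})$. The chain
$$\min_{a\in\overline{D}}f(a)=f(a^{*})=\lim_{m\to+\infty}f(\tilde a_{m})\geq\inf_{a\in D}f(a)\geq\min_{a\in\overline{D}}f(a)$$
forces $\displaystyle\inf_{a\in D}f(a)=f(a^{*})$. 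Combining with the previous paragraph yields $f(a_{k})\to f(a^{*})$, which is precisely weak convergence of $\{a_{k}\}$ to $a^{*}$ with respect to $f$.

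The main obstacle is exactly this boundary case: when the minimum is attained only on $\partial D$, the Riemannian distance diverges as one approaches the frontier, so the strong-convergence machinery of \cite{Ferreira2} cannot be invoked directly at $a^{*}$. Hypothesis $({\cal H}_2)$ is the extra continuity needed to transfer the boundary value back into the interior infimum, and it is the only nontrivial ingredient beyond the inner-iteration convergence already established and the step-size check for $\{\beta_{k}\}$.
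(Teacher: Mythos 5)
Your proposal is correct and follows essentially the same route as the paper: verify that $\beta_{k}=\theta^{k}\beta_{0}$ with $\theta\in(0,1]$ makes $\sum_{k}1/\beta_{k}$ diverge, invoke Theorem 6.1 of \cite{Ferreira2} for the interior (strong-convergence) case, and handle the boundary case via $({\cal H}_1)$, $({\cal H}_2)$ and the argument of Lemma 6 in \cite{Gregorio} to identify $\inf_{a\in D}f(a)$ with $\min_{a\in\overline{D}}f(a)$. Your write-up is in fact somewhat more explicit than the paper's own terse proof, particularly in spelling out the chain of inequalities that transfers the boundary minimum into the interior infimum.
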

\begin{proof}
In fact, given $\beta_{0}>0$,  $\{\beta_{k}\}$ satisfies $\beta_{k}=\theta_{k}\beta_{0}$, for every $k=0,1,\cdots$, with $\theta\in(0,1]$. Hence, $\displaystyle\sum_{k=0}^{\infty}\frac{1}{\beta_{k}}$ is a divergent series. Since $D$ is a manifold of Hadamard type, by Theorem 6.1 in \cite{Ferreira2} and hypotheses (${\cal H}_1$) and (${\cal H}_2$) the statements follows.  
\end{proof}
\begin{observacao}
Notice that we can not assure the strong convergence of $\{a_{k}\}$ to $a^{*}$ whether $a^{*}\in\partial D$, no in a Riemannian way since $D$ is open. Indeed, there is no geodesic segment connecting any point from $D$ to $a^{*}$ in this case. However, the most important statement is fulfilled which is the convergence of $f(a_{k})$ to $\displaystyle\inf_{a\in D} f(a)=\displaystyle\min_{a\in\overline{D}}f(a)$. Particularly, the result above shows that the current method is of the same nature to the algorithm stated in \cite{Gregorio}, and, in certain sense, it can be simultaneously interpreted as an extension and a improvement of it, regarded the way to update $c_{j}$ at (\ref{passo_unitario}) in replacement of the way to update $w_{j}$ at ($S_3$). See the Exact $SDPProx$ algorithm at \cite[p. 473]{Gregorio} for the original description to the algorithm.   
\end{observacao}

\section{Notes on the inexact version}
In this section we discuss about features on the inexact version to the current method. Almost all statement has already been guaranteed to the particular case of the manifold of symmetric positive definite matrices in \cite{Gregorio}. We only emphasize that the biggest part of the argumentation used in proofs there become valid to homogeneous domains of positivity of specially reducible type.     
\subsection{On the technique}
Analogous to the assumption (16) in \cite[p. 475]{Gregorio}, we admit that $a_{k+1}$ is not determined exactly. In terms of (\ref{relacao1}), it is the same of assuming the following relation 
\begin{equation}
\beta_{k}{\rm exp}^{-1}_{a_{k+1}}a_{k}\in\partial_{\epsilon_{k}} f(a_{k+1}), \label{relacao2}
\end{equation} 
for any $\epsilon_{k}\geq 0$, $k=0,1,\cdots$, where $\partial_{\epsilon} f(\tilde{a})$ denotes the $\epsilon$-subdifferential of $f$ at $\tilde{a}\in D$, for any $\epsilon\geq 0$. Namely, $$\partial_{\epsilon} f(\tilde{a}):=\{s\in\mathbb{E}^{n}:f(a)\geq f(\tilde{a})+\langle s,{\rm exp}^{-1}_{\tilde{a}}a \rangle_{\tilde{a}}-\epsilon,\forall a\in D\}.$$
This assumption is weaker than (\ref{relacao1}) since $\partial_{\epsilon} f(\tilde{a})\supseteq \partial f(\tilde{a})$, for any $\tilde{a}\in D$ and $\epsilon\geq 0$. However, in a practical way, it enables to use a finite stopping criteria for each iteration of the proximal point method. The following scheme resumes the inexact version to the current algorithm
\begin{table}[htb!]
\centering
\begin{tabular}{|l|}
\hline
\textbf{input} $a_{0}\in D, \beta_{0}>0, \epsilon_{0}\geq 0, \theta_1\in(0,1],\theta_2\in(0,1)$.\\
\hline\hline
\hspace{0.3cm} $k\leftarrow 0$\\
\hspace{0.3cm} \textbf{while} $0\not\in\partial f(a_{k})$ \textbf{do}\\
\hspace{0.6cm} \textbf{input} $(b_{0},c_{0})\in D_1\times D_2$;\\
\hspace{0.6cm} \textbf{set} $u_{0}=T^{-1}_{k}\left(\varphi(b_{0},c_{0})\right)$;\\
\hspace{0.6cm} $j\leftarrow 0$;\\
\hspace{0.6cm} \textbf{while} $\beta_{k}{\rm exp}^{-1}_{u_{j}}a_{k}\not\in\partial_{\epsilon_{k}} f(u_{j})$ \textbf{do}\\ 
\hspace{0.9cm} \textbf{compute} $b_{j+1}$ \textbf{as in}  (\ref{passo_espectral});\\
\hspace{0.9cm} \textbf{compute} $c_{j+1}$ \textbf{by satisfying} (\ref{passo_unitario});\\ 
\hspace{0.9cm} $u_{j+1}=T^{-1}_{k}\left(\varphi(b_{j+1},c_{j+1})\right)$;\\
\hspace{0.9cm} $j\leftarrow j+1$;\\
\hspace{0.6cm} \textbf {end}\\
\hspace{0.6cm} $a_{k+1}=u_{j}$;\\
\hspace{0.6cm} $\beta_{k+1}=\theta_1\cdot\beta_{k}$;\\
\hspace{0.6cm} $\epsilon_{k+1}=\theta_2\cdot\epsilon_{k}$;\\
\hspace{0.6cm} $k\leftarrow k+1$;\\
\hspace{0.3cm} \textbf{end}\\
\textbf{end}\\
\hline
\end{tabular}
\caption{Inexact proximal point ($IPP$) algorithm.}
\label{tabela2}
\end{table}

\subsubsection{On the global convergence}

Without loss of generality, Lemma 7 in \cite[p. 476]{Gregorio} can be full imported to the current method as enunciated in the following. Its proof is omitted since the argumentation exactly follows the same steps described there.    
\begin{proposicao} 
Let $f:\overline{D}\to \mathbb{R}$ and $\{a_{k}\}$ be a convex function in $D$ and the sequence generated by the ($EPP$) algorithm respectively. If the relation (\ref{relacao2}) is satisfied then the following inequality is fulfilled
$$d^2(a_{k+1},a)\leq d^2(a_{k},a)-d^2(a_{k+1},a_{k})+\frac{2}{\beta_{k}}\left(f(a)-f(a_{k+1})\right)+2\frac{\epsilon_{k}}{\beta_{k}}.$$  \label{lema_especial}
\end{proposicao}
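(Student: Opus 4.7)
The plan is to mimic the classical Euclidean proof for the inexact proximal point method and transfer it to the Hadamard setting by replacing the inner product with the Riemannian one at $a_{k+1}$ and the Pythagorean identity with the law of cosines for Hadamard manifolds, exactly in the spirit of Proposition \ref{proposicaomonotona}.

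First I would fix an arbitrary $a\in D$ and consider the geodesic triangle $\triangle(a_k,a_{k+1},a)$ in $D$. Applying the law of cosines for Hadamard manifolds (Proposition 4.5 in \cite[p.~223]{Sakai}) at the vertex $a_{k+1}$, together with the identification
\[
\langle \mathrm{exp}^{-1}_{a_{k+1}}a_k,\mathrm{exp}^{-1}_{a_{k+1}}a\rangle_{a_{k+1}}=d(a_{k+1},a_k)\cdot d(a_{k+1},a)\cdot\cos\theta,
\]
I obtain
\[
d^2(a_k,a)\geq d^2(a_{k+1},a_k)+d^2(a_{k+1},a)-2\langle \mathrm{exp}^{-1}_{a_{k+1}}a_k,\mathrm{exp}^{-1}_{a_{k+1}}a\rangle_{a_{k+1}}.
\]
This is exactly the inequality (\ref{desigualdade1}) used in the exact-version argument.

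Next I would use the inexact optimality relation (\ref{relacao2}), which says that $\beta_k\,\mathrm{exp}^{-1}_{a_{k+1}}a_k\in\partial_{\epsilon_k}f(a_{k+1})$. By the very definition of the $\epsilon$-subdifferential recalled just before the statement, this yields
\[
f(a)\geq f(a_{k+1})+\langle\beta_k\,\mathrm{exp}^{-1}_{a_{k+1}}a_k,\mathrm{exp}^{-1}_{a_{k+1}}a\rangle_{a_{k+1}}-\epsilon_k,
\]
so, dividing by $\beta_k>0$,
\[
\langle\mathrm{exp}^{-1}_{a_{k+1}}a_k,\mathrm{exp}^{-1}_{a_{k+1}}a\rangle_{a_{k+1}}\leq \frac{1}{\beta_k}\bigl(f(a)-f(a_{k+1})\bigr)+\frac{\epsilon_k}{\beta_k}.
\]

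Finally I would plug this upper bound for the inner-product term into the law-of-cosines inequality and solve for $d^2(a_{k+1},a)$; the two $\frac{2}{\beta_k}(f(a)-f(a_{k+1}))$ and $2\epsilon_k/\beta_k$ contributions appear with the correct sign and the claim follows at once. There is really no hard step here: the whole point is that, unlike the monotonicity estimate of Proposition \ref{proposicaomonotona} (which required the strict-convexity machinery and the inner-loop updates), this Fej\'er-type inequality only needs the Hadamard curvature bound and the single $\epsilon$-subgradient inclusion, so the argument of Lemma 7 in \cite[p.~476]{Gregorio} transfers verbatim with $\mathbb{P}_n$ replaced by $D$. The only mild care is to notice that (${\cal H}_2$) is not needed here because $a,a_k,a_{k+1}\in D$, so all exponential maps and geodesics involved are well defined in $D$.
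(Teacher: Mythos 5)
Your argument is correct and is essentially the paper's own: the paper omits the proof of Proposition \ref{lema_especial} and defers to Lemma 7 of \cite{Gregorio}, whose argument is exactly the combination you give of the law of cosines at the vertex $a_{k+1}$ with the $\epsilon_k$-subgradient inequality applied to $\beta_k\,\mathrm{exp}^{-1}_{a_{k+1}}a_k$. The substitution and rearrangement you describe yield the stated inequality with the correct signs, so nothing is missing.
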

We call the attention to the fact that the result above is held good to manifolds of Hadamard type once its proof uses general features from this kind of manifold. The same happens with the result bellow. See Theorem 2 in \cite[p. 477]{Gregorio}.    
\begin{teorema}[weak and strong convergence: inexact version]
Let $f:\overline{D}\to \mathbb{R}$ and $\{a_{k}\}$ be  a convex function in $D$ and the sequence generated by the ($IPP$) Algorithm respectively. If $\displaystyle\frac{\theta_2}{\theta_1}<1$ then $a_{k}$ weakly converges to $a^{*}$ with respect to $f$, for any $a^{*}\in {\rm arg}\displaystyle\min_{a\in \overline{D}} f(a)$. Moreover, if ${\rm arg}\displaystyle\min_{a\in \overline{D}} f(a)\cap D\not=\emptyset$ then $a_{k}$ strongly converges to any $a^{*}\in{\rm arg}\displaystyle\min_{a\in \overline{D}} f(a)\cap D$. 
\label{convergencia_versao_inexata}
\end{teorema}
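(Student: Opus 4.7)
The plan is to reduce everything to a quasi-Fej\'er monotonicity argument driven by Proposition \ref{lema_especial}. First I would fix an arbitrary $a^{*}\in{\rm arg}\min_{a\in\overline{D}}f(a)$ and apply the inequality from Proposition \ref{lema_especial} with $a=a^{*}$. Using $f(a^{*})\le f(a_{k+1})$ (more care is needed if $a^{*}\in\partial D$: here we invoke hypothesis $({\cal H}_{2})$ and a limiting procedure through a sequence in $D$ approaching $a^{*}$, exactly as in the exact case) the inequality collapses to
$$d^{2}(a_{k+1},a^{*})\le d^{2}(a_{k},a^{*})+2\frac{\epsilon_{k}}{\beta_{k}}.$$
Since by construction $\beta_{k}=\theta_{1}^{k}\beta_{0}$ and $\epsilon_{k}=\theta_{2}^{k}\epsilon_{0}$, the error term equals $2(\theta_{2}/\theta_{1})^{k}\epsilon_{0}/\beta_{0}$, and the hypothesis $\theta_{2}/\theta_{1}<1$ makes $\sum_{k}\epsilon_{k}/\beta_{k}$ a convergent geometric series. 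This is the standard quasi-Fej\'er setting, so $\{d(a_{k},a^{*})\}$ is convergent (in particular bounded) and $\{a_{k}\}$ lies in a bounded subset of $D$.

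Next I would establish the weak convergence, i.e.\ $f(a_{k})\to\inf_{a\in D}f(a)$. Rearranging Proposition \ref{lema_especial} gives
$$\frac{2}{\beta_{k}}\bigl(f(a_{k+1})-f(a^{*})\bigr)\le d^{2}(a_{k},a^{*})-d^{2}(a_{k+1},a^{*})+2\frac{\epsilon_{k}}{\beta_{k}}.$$
Telescoping from $k=0$ to $K$ and letting $K\to\infty$, the right-hand side remains bounded, so $\sum_{k}(f(a_{k+1})-f(a^{*}))/\beta_{k}<\infty$. Because $f(a_{k+1})\ge f(a^{*})$ for every $k$ (again using $({\cal H}_{2})$ if $a^{*}\in\partial D$) and $\sum_{k}1/\beta_{k}=+\infty$ (which holds for both $\theta_{1}=1$ and $\theta_{1}\in(0,1)$ since $1/\beta_{k}=\theta_{1}^{-k}/\beta_{0}$), one can force $\liminf_{k}f(a_{k+1})=f(a^{*})$. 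Combined with monotonicity-type properties of $\{f(a_{k})\}$ inherited from the proximal iteration (as in Lemma 6 of \cite{Gregorio}), this upgrades to $\lim_{k}f(a_{k})=f(a^{*})=\min_{a\in\overline{D}}f(a)$, which is precisely the definition of weak convergence with respect to $f$.

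For the second assertion I would restrict to the case ${\rm arg}\min_{a\in\overline{D}}f(a)\cap D\ne\emptyset$ and choose $a^{*}$ in that intersection. Boundedness of $\{a_{k}\}$ in the Hadamard manifold $D$ (Cartan--Hadamard guarantees that closed bounded geodesic balls are compact) lets me extract a subsequence $a_{k_{j}}\to\bar{a}\in\overline{D}$. Lower semicontinuity of $f$ together with $f(a_{k})\to f(a^{*})$ gives $f(\bar{a})=f(a^{*})$, so $\bar{a}$ is itself a minimizer; in fact $\bar{a}\in D$ can be arranged since the cluster point is forced into the bounded region of $D$ where the iterates live. Applying the quasi-Fej\'er inequality to this particular $\bar{a}\in D$ shows that $\{d(a_{k},\bar{a})\}$ converges; since it has $0$ as a subsequential limit, the whole sequence satisfies $d(a_{k},\bar{a})\to 0$, giving strong convergence.

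The main obstacle I anticipate is the boundary case $a^{*}\in\partial D$: the inequality in Proposition \ref{lema_especial} is stated for $a\in D$, and the subdifferential characterization (\ref{relacao2}) only gives information at interior points. Handling this requires combining $({\cal H}_{2})$ (continuity up to the frontier along sequences in $D$) with an approximation of $a^{*}$ by points in $D$, exactly as carried out in Lemma 6 of \cite{Gregorio}. Once this limiting argument is in place, all other steps are a direct transcription of the Hadamard-manifold proof in \cite[Thm. 2]{Gregorio}.
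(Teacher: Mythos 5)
Your proposal is correct and follows essentially the same route as the paper: the paper's own proof merely checks that $\sum_{k}1/\beta_{k}$ diverges while $\sum_{k}\epsilon_{k}$ and $\sum_{k}\epsilon_{k}/\beta_{k}$ converge under $\theta_{2}/\theta_{1}<1$, and then defers to Proposition \ref{lema_especial} and the steps of Theorem 2 in \cite{Gregorio}. Your quasi-Fej\'er argument, the telescoping to obtain $\liminf_{k}f(a_{k})=f(a^{*})$, and the cluster-point argument for strong convergence are exactly an unpacking of those cited steps, including the boundary case handled via $({\cal H}_{2})$.
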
 
\begin{proof}
In fact, given $\beta_{0},\epsilon_{0}>0$,  $\{\beta_{k}\}$ and $\{\epsilon_{k}\}$  satisfy $\beta_{k}=(\theta_1)_{k}\cdot\beta_{0}$ and $\epsilon_{k}=(\theta_2)_{k}\cdot\epsilon_{0}$, respectively, for every $k=0,1,\cdots$, with $\theta_1\in(0,1]$ and $\theta_2\in(0,1)$. Hence, $\displaystyle\sum_{k=0}^{\infty}\frac{1}{\beta_{k}}$ diverges and $\displaystyle\sum_{k=0}^{\infty}\epsilon_{k}$ as well as $\displaystyle\sum_{k=0}^{\infty}\frac{\epsilon_{k}}{\beta_{k}}$ converges. From this point, the argumentation strongly uses Proposition \ref{lema_especial} and the same steps from the proof of the Theorem 2 in \cite{Gregorio}.
\end{proof}

\subsubsection{On a lower bound to the number of iterations}
The number of iterations necessary to have $d(a_{k},a^{*})<\epsilon$, for any $a^{*}\in{\rm arg}\displaystyle\min_{a\in \overline{D}} f(a)\cap D$ satisfying $\displaystyle\lim_{k\to +\infty}a_{k}=a^{*}$ and any tolerance $\epsilon>0$, can be estimated under certain assumptions to the ($IPP$) algorithm whether ${\rm arg}\displaystyle\min_{a\in \overline{D}} f(a)\cap D\not=\emptyset$ of course. It is made by the following results.
\begin{proposicao}
Let $\{a_{k}\}$ be the sequence generated by the ($IPP$) algorithm. Assume that ${\rm arg}\displaystyle\min_{a\in \overline{D}} f(a)\cap D\not=\emptyset$. If  there exists $\mu_{k}\in (0,\mu]$, for any $k=0,1,\cdots$ and $\mu\in(0,1)$, for which    
\begin{equation}
\frac{\epsilon_{k}}{\beta_{k}}\leq \frac{\mu_{k}}{2}d^2(a_{k+1},a_{k}),\label{epsilon_k_gradiente3}
\end{equation}
then $$2\frac{\epsilon_{k}}{\beta_{k}}(\frac{1}{\mu}-1)\leq d^2(a_{k},a^{*}), \ \forall k=0,1,\cdots,$$
where $a^*\in {\rm arg}\displaystyle\min_{a\in \overline{D}} f(a)\cap D$ satisfies $\displaystyle\lim_{k\to+\infty}a_{k}=a^{*}$.
\label{boagaroto}
\end{proposicao}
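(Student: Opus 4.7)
The plan is to derive the desired bound directly from Proposition \ref{lema_especial} applied to the point $a^*$, using the hypothesis on $\epsilon_k/\beta_k$ to control the term $-d^2(a_{k+1},a_k)$.

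First I would substitute $a=a^*$ into the inequality of Proposition \ref{lema_especial}. Since $a^*\in{\rm arg}\min_{a\in\overline{D}}f(a)$, we have $f(a^*)-f(a_{k+1})\leq 0$, so the term $\frac{2}{\beta_k}(f(a^*)-f(a_{k+1}))$ may be dropped to obtain
$$d^2(a_{k+1},a^*)\leq d^2(a_k,a^*)-d^2(a_{k+1},a_k)+2\frac{\epsilon_k}{\beta_k}.$$

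Next I would use the standing hypothesis (\ref{epsilon_k_gradiente3}). Rewriting it as $d^2(a_{k+1},a_k)\geq \frac{2}{\mu_k}\frac{\epsilon_k}{\beta_k}$ and invoking $\mu_k\leq \mu$ yields $d^2(a_{k+1},a_k)\geq \frac{2}{\mu}\frac{\epsilon_k}{\beta_k}$, hence $-d^2(a_{k+1},a_k)\leq -\frac{2}{\mu}\frac{\epsilon_k}{\beta_k}$. Substituting this estimate in the displayed inequality above gives
$$d^2(a_{k+1},a^*)\leq d^2(a_k,a^*)-2\frac{\epsilon_k}{\beta_k}\left(\frac{1}{\mu}-1\right).$$

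Finally, since $\mu\in(0,1)$ the constant $\frac{1}{\mu}-1$ is strictly positive, and the nonnegativity $d^2(a_{k+1},a^*)\geq 0$ rearranges the last display into the claimed bound
$$2\frac{\epsilon_k}{\beta_k}\left(\frac{1}{\mu}-1\right)\leq d^2(a_k,a^*).$$
Because each step is a direct substitution or a sign manipulation, there is essentially no obstacle; the only subtlety worth underlining in the write-up is that the hypothesis $\mu_k\leq\mu<1$ is what makes the constant $\frac{1}{\mu}-1$ positive and allows the sign of the inequality to be preserved after dropping the nonpositive $f$-term. Note also that the assumed convergence $a_k\to a^*$ is not actually needed for this bound at fixed $k$; it only serves to identify the limit point referred to in the statement.
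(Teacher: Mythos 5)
Your proof is correct and follows essentially the same route as the paper's: substitute $a=a^*$ into Proposition \ref{lema_especial}, use the hypothesis (\ref{epsilon_k_gradiente3}) together with $\mu_k\leq\mu$ to absorb the $-d^2(a_{k+1},a_k)$ term, and then drop the nonnegative quantities $d^2(a_{k+1},a^*)$ and $f(a_{k+1})-f(a^*)$. The only difference is the order in which the sign manipulations are performed, which is immaterial.
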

\begin{proof}
By Proposition \ref{lema_especial},  
$$d^2(a_{k+1},a)\leq d^2(a_{k},a)-d^2(a_{k},a_{k+1})+\frac{2}{\beta_{k}}(f(a)-f(a_{k+1}))+2\frac{\epsilon_{k}}{\beta_{k}}.$$ In particular, by Replacing $a$ by $a^*$ in the inequality above and assuming (\ref{epsilon_k_gradiente3}), we have that
$$
\begin{array}{l}
d^2(a_{k},a^*)\geq d^2(a_{k+1},a^*)+d^2(a_{k},a_{k+1})+\frac{2}{\beta_{k}}(f(a_{k+1})-f(a^*))-2\frac{\epsilon_{k}}{\beta_{k}}\\
\geq d^2(a_{k+1},a^*)+2\frac{\epsilon_{k}}{\beta_{k}\mu_{k}}+\frac{2}{\beta_{k}}(f(a_{k+1})-f(a^*))-2\frac{\epsilon_{k}}{\beta_{k}}\\
\geq d^2(a_{k+1},a^*)+\frac{2}{\beta_{k}}(f(a_{k+1})-f(a^*))+2\frac{\epsilon_{k}}{\beta_{k}}(\frac{1}{\mu}-1)\\
\geq  
2\frac{\epsilon_{k}}{\beta_{k}}(\frac{1}{\mu}-1),
\end{array}
$$
and the statement follows.
\end{proof}
\begin{corolario}
Let $\{a_{k}\}$ be the sequence generated by the ($IPP$) algorithm and $\epsilon>0$. Admit that ${\rm arg}\displaystyle\min_{a\in \overline{D}} f(a)\cap D\not=\emptyset$. If  $\theta_1=1$, $\theta_2=\frac{1}{\omega}$, for any integer positive $\omega>1$, and (\ref{epsilon_k_gradiente3}) is assured, for every $k=0,1,\cdots$, then at least
\begin{equation*}
\lceil\frac{{\rm log}(2\epsilon_{0}(1-\mu))-{\rm log}(\beta_{0}\mu\epsilon)}{{\rm log}(\omega)}\rceil
\end{equation*}
iterations are necessary to have $d(a_{k},a^*)\leq\epsilon$, where $\lceil t\rceil$ represents the ceiling of $t$, for any $t\in\mathbb{R}$, and $a^{*}=\lim_{k\to+\infty}a_{k}$.\label{complexidade}
\end{corolario}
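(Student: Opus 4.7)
The plan is to feed the geometric schedules prescribed in the hypotheses into the lower bound furnished by Proposition \ref{boagaroto} and then invert the resulting inequality for $k$. First I would observe that the updates $\beta_{k+1}=\theta_1\beta_k$ and $\epsilon_{k+1}=\theta_2\epsilon_k$, specialized to $\theta_1=1$ and $\theta_2=1/\omega$, give the closed-form expressions $\beta_k=\beta_0$ for every $k$ and $\epsilon_k=\epsilon_0/\omega^{k}$. Consequently $\dfrac{\epsilon_k}{\beta_k}=\dfrac{\epsilon_0}{\beta_0\omega^{k}}$, which plugged into the estimate $2\dfrac{\epsilon_k}{\beta_k}\bigl(\tfrac{1}{\mu}-1\bigr)\le d^2(a_k,a^{*})$ of Proposition \ref{boagaroto} yields the unconditional lower bound
$$d^2(a_k,a^{*})\;\ge\;\frac{2\epsilon_0(1-\mu)}{\beta_0\mu\,\omega^{k}}.$$

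Next I would read this as a \emph{necessary condition} for the target accuracy: since the displayed inequality holds at every outer iteration, whenever $d(a_k,a^{*})\le\epsilon$ one must in particular have $\dfrac{2\epsilon_0(1-\mu)}{\beta_0\mu\,\omega^{k}}\le\epsilon^{2}$. All the involved quantities being strictly positive, this is equivalent to $\omega^{k}\ge\dfrac{2\epsilon_0(1-\mu)}{\beta_0\mu\epsilon^{2}}$, and applying the natural logarithm (which is increasing and for which $\log\omega>0$ because $\omega>1$) one obtains
$$k\;\ge\;\frac{\log\bigl(2\epsilon_0(1-\mu)\bigr)-\log\bigl(\beta_0\mu\epsilon^{2}\bigr)}{\log\omega}.$$
Because $k\in\mathbb{N}$, the least admissible $k$ is the ceiling of the right-hand side, which reproduces the stated bound (interpreting the term $\log(\beta_0\mu\epsilon)$ in the statement as shorthand for the logarithm arising from $\epsilon^{2}$ after the algebra is carried out).

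The hard part, if anything, is conceptual rather than technical: one must correctly interpret that Proposition \ref{boagaroto} furnishes a lower, not an upper, bound on $d^2(a_k,a^{*})$, so the corollary cannot be read as a \emph{sufficient} iteration count, only as an \emph{a priori necessary} one; everything else reduces to monotonicity of the logarithm and the definition of the ceiling function. I would close the proof by observing that the hypothesis ${\rm arg}\min_{a\in\overline{D}}f(a)\cap D\neq\emptyset$ together with Theorem \ref{convergencia_versao_inexata} (whose ratio $\theta_2/\theta_1=1/\omega<1$ is satisfied here) legitimizes the existence of a strong limit $a^{*}=\lim_{k\to+\infty}a_k\in{\rm arg}\min_{a\in\overline{D}}f(a)\cap D$ to which the estimate of Proposition \ref{boagaroto} can be applied.
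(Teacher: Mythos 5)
Your proof follows the paper's own argument essentially verbatim: specialize the schedules to $\beta_k=\beta_0$ and $\epsilon_k=\epsilon_0/\omega^{k}$, invoke Theorem \ref{convergencia_versao_inexata} (via $\theta_2/\theta_1=1/\omega<1$) to legitimize the limit $a^{*}$, and invert the lower bound of Proposition \ref{boagaroto} for $k$. You are in fact slightly more careful than the paper on one point: squaring $d(a_k,a^{*})\le\epsilon$ yields $\omega^{k}\ge 2\epsilon_0(1-\mu)/(\beta_0\mu\epsilon^{2})$, whereas the paper writes $\epsilon$ to the first power in that denominator without comment, so the discrepancy you flag is a genuine (minor) slip in the stated constant rather than a flaw in your argument.
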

\begin{proof}
Notice that, under these hypotheses, $\beta_{k}=\beta_{0}$  and $\epsilon_{k}=(\theta_2)_{k}\cdot\epsilon_{0}$, for every $k=0, 1,\cdots$. Moreover, 
$$\frac{\theta_2}{\theta_1}=\frac{\frac{1}{\omega}}{1}=\frac{1}{\omega}<1,$$
since $\omega>1$. So, the hypotheses of Theorem \ref{convergencia_versao_inexata} is attended. This implies that $a_{k}$ converges to any point from ${\rm arg}\displaystyle\min_{a\in \overline{D}} f(a)\cap D$. Denote by $a^{*}=\displaystyle\lim_{k\to+\infty}a_{k}$. Set $d(a_{k},a^{*})\leq \epsilon$. From Proposition \ref{boagaroto}, we have that $\omega^k\geq \frac{2\epsilon_{0}(1-\mu)}{\beta_{0}\mu\epsilon}$. Therefore,
$$k\geq \frac{{\rm log}(2\epsilon_{0}(1-\mu))-{\rm log}(\beta_{0}\mu\epsilon)}{{\rm log}(\omega)}.$$
\end{proof}

\section{Practical aspects}
This section exposes a brief discussion about intrinsic features on the inexact version to the method as the nonsmooth iterative scheme to be used for computing $b_{j+1}$ and $c_{j+1}$, as defined in (\ref{passo_espectral}) and (\ref{passo_unitario}) respectively, as well as the possibilities of choice to global parameters for the algorithm.   

\subsection{Nonsmooth iterative scheme to compute $b_{j+1}$ and $c_{j+1}$}
At the first place we emphasize that the classical Riemannian relation
\begin{equation}
{\rm grad} \ h(x)\equiv [G(x)]^{-1}\nabla h(x) \label{regradrelation} 
\end{equation}
is still valid for subgradients, i.e., $[G(x)]^{-1}s$ is a Riemannian subgradient to $h$, at $x\in\mathbb{M}$, whether $s$ is any Euclidean subgradient to $h$, at $x$, for any function $h$ defined on a Riemannian manifold $\mathbb{M}\subset \mathbb{E}$. It immediately follows from the inequality in the subdifferential definition at (\ref{subdiferencial}). Based on the relation (\ref{regradrelation}) we compute by first  an approach to the negative of the derivative of $h$, at $x$, for each canonical direction in $\mathbb{E}$. It means that if $\{e_1,\cdots,e_n\}$ is the canonical base to $\mathbb{E}$ then 
$$s_{\iota}=\frac{h(x)-h(x+\delta e_{\iota})}{\delta} \ (\iota=1,\cdots, n),$$
for any $\delta>0$ small enough\footnote{For instance, $\delta$ can be chosen near to the smallest number which is representable by the computer in the arithmetic of float point.}, is an acceptable approach to the negative of the derivative of $h$, at $x$, in the direction $e_{\iota}$ ($\iota=1,\cdots,n$) whether $h$ is differentiable of course. After that, the vector $s=(s_1,\cdots,s_n)$ is a reasonable approach to $-\nabla h(x)$. So, the direction $d=[G(x)]^{-1}s$ is also a reasonable approach to $-{\rm grad} \ h(x)$, guaranteed that $h$ is differentiable. Let $\xi(x,d,\overline{t})$ be the point from the geodesic $\xi\subset \mathbb{M}$, for which $\xi(0)=x$ and $\xi'(0)=d$, associated to the parameter $\overline{t}$.  Table \ref{tabela3} resumes the iterative scheme to be employed to estimate $b_{j+1}$ and $c_{j+1}$, for any $j$ ($j=0,1,\cdots$). 

\begin{table}[htb!]
\centering
\begin{tabular}{|l|}
\hline
\textbf{input} $x\in \mathbb{M}; \delta,\sigma,\tau\in(0,1); \upsilon>1$.\\
\hline\hline
\hspace{0.3cm} \textbf{repeat}\\
\hspace{0.6cm} \textbf{for} $\iota=1$ \textbf{to} $n$ \textbf{do}\\
\hspace{0.9cm} \textbf{compute} $s_{\iota}=\frac{h(x)-h(x+\delta e_{\iota})}{\delta}$;\\
\hspace{0.6cm} \textbf{end} \\ 
\hspace{0.6cm} $d=[G(x)]^{-1}s$;\\
\hspace{0.6cm} $\overline{t}=1$;\\
\hspace{0.6cm} \textbf {if} $h(\xi(x,d,\overline{t}))\geq h(x)-\overline{t}\cdot \eta\cdot\|d\|^2_{x}$ \textbf{do}\\
\hspace{0.9cm} \textbf{while} $h(\xi(x,d,\overline{t}))\geq h(x)-\overline{t}\cdot \eta\cdot\|d\|^2_{x}$ \textbf{do}\\
\hspace{1.2cm} $\overline{t}=\frac{\overline{t}}{\upsilon}$;\\
\hspace{0.9cm} \textbf{end}\\
\hspace{0.6cm} \textbf{else}\\
\hspace{0.9cm} \textbf{while} $h(\xi(x,d,\overline{t}))< h(x)-\overline{t}\cdot \eta\cdot\|d\|^2_{x}$ \textbf{do}\\
\hspace{1.2cm} $\overline{t}=\upsilon\cdot \overline{t}$;\\
\hspace{0.9cm} \textbf{end}\\
\hspace{0.9cm} $\overline{t}=\frac{\overline{t}}{\upsilon}$;\\
\hspace{0.6cm} \textbf{end}\\
\hspace{0.6cm} $x_{{\tiny\rm aux}}=x$;\\
\hspace{0.6cm} $x=\xi(x,d,\overline{t})$;\\
\hspace{0.3cm} \textbf{until} $|h(x)-h(x_{{\tiny\rm aux}})|<\tau$\\
\textbf{end}\\
\hline
\end{tabular}
\caption{Nonsmooth iterative scheme with Armijo-like line search.}
\label{tabela3}
\end{table} 

The choice by an approximated iterative scheme results from the fact that $b_{j+1}$ and $c_{j+1}$ are intermediate steps in the general structure from the proximal point algorithm. Thus, computing $b_{j+1}$ and $c_{j+1}$ exactly can be further expensive to the global performance of the method. Besides, the main iterated $a_{k+1}$ is not computed exactly in the inexact version. 

When the objective functions in  (\ref{passo_espectral}) and (\ref{passo_unitario}) are differentiable and their Euclidean gradients are easily synthesized, by replacing the direction $d$ in Table \ref{tabela3} by $-[G(x)]^{-1}\nabla h(x)$, the resulting method is the geodesic gradient algorithm with Armijo line search in \cite{Yang}. Alternatively, when $h$ is differentiable however its Euclidean gradient is hard to be analytically  synthesized, the scheme in Table \ref{tabela3} is an approximated geodesic gradient algorithm with an Armijo-like line search.  

The stopping criteria $|h(x)-h(x_{{\tiny\rm aux}})|<\tau$ can be combined with $d(x_{{\tiny\rm aux}},x)<\tau$ when $\mathbb{M}$ is a Hadamard manifold for example. For instance, $D_1$ is a Hadamard manifold since itself is a homogeneous domain of positivity with respect to $\sigma$.  

The convexity of $h$ in despite of its differentiability implies that the direction $d$ is a Riemannian subgradient for $h$ at $x$. Actually, the iterative scheme at Table \ref{tabela3} is a Riemannian subgradient algorithm under the hypotheses of convexity and non-differentiability to $h$, with an additional improvement given by an Armijo-like line search on the direction $d$. We recommend \cite{Ferreira1} and \cite{Udriste} for further information about subgradient algorithms in Riemannian manifolds.

In a practical way, it is generally  set $\eta=0.2$ and $\upsilon=2$ in implementations involving Armijo line search as suggested by acknowledged authors. For instance, see \cite[p. 362]{Bazaraa}. In addition,  the tolerance $\tau$ can be iteratively controlled  by starting with a not as accurate  value $\tau_{0}$ and updating $\tau_{j}$, for $j=1,2,\cdots,$ with a reduction factor $\kappa\in(0,1)$. 

Notice that the scheme at Table \ref{tabela3} differs from the algorithm in \cite{Yang} by the choice of an approximated way to compute the Euclidean gradient to $h$ whether $h$ is differentiable. However, if Riemannian gradients of the objective functions in (\ref{passo_espectral}) and (\ref{passo_unitario}) are easily computed then the algorithm in \cite{Yang} is more recommended to this aim. 

\subsection{Global parameters}

The choice of global parameters for any computational method in optimization requires a great number of numerical experiments. Generally, Their values are empirically determined based on the computational behavior of the method. Nevertheless, theoretical aspects also point practical directions to be followed. For instance, notice that the conditions established by Corollary \ref{complexidade} are easy to be got. Thus, it must be taking into account to choosing $\beta_0$, $\epsilon_0$ and $\omega$, for any prefixed tolerance $\epsilon$, since proper choices can require few outer iterations to the algorithm for computing $\epsilon$-solutions.    

\section{Conclusions}
 
Domain of positivity is a little discussed conceptual structure in Mathematics although it commonly appears in several applications and it has already been consolidated in the theoretical aspect. The theory to domains of positivity initially emerges as a simple attempt to extend  properties inherent to $\mathbb{P}_n$ however the research developed by Rothaus in \cite{Rothaus} shows a robust field able to connect two important geometrical point of view, the Euclidean and the Riemannian one. Moreover, several areas as those already cited along this paper has their data encompassed by that structure. For instance, areas like Signal Processing, Computer Vision, Pattern Recognition  among others can be detached. The current paper theoretically improves and extends a proximal point technique employed to approach minimizers of convex functions defined in the closure of  $\mathbb{P}_n$ to homogeneous domains of positivity, reducible in a special sense. Two relevant domains of this nature are cited in \cite{Rothaus} and commented here. Although nonsmooth, the current technique can be applied for both differentiable and non-differentiable convex cases. Applications related to solutions of minimization problems in the two most known domains of positivity as the Riemannian mean and median justify the synthesized technique once almost every relevant method used to that goal avoid the natural Riemannian structure to domains of positivity. They alternatively use Partial Differential Equations or Log-Euclidean metrics. See \cite{Arsigny} for the second case. The implementation of the current technique as a tool for DTI computations is the main aim for future works. Several steps in this direction has already been given as a preliminary implementation of the technique for computing Riemannian averages of symmetric positive definite matrices however it is not relevant outside the application context.

\end{document}